%
%       27/06/21 Meny
%		
%
%
\documentclass{amsart}
\usepackage[T1]{fontenc}   %%%%%%%Please do not change

\usepackage{amssymb}
\usepackage{amssymb,amsthm} % defines symbol names for all the math symbols in the fonts MSAM and MSBM
\usepackage{amscd,amssymb,graphics}

\usepackage[usenames]{color}
\usepackage{mathrsfs}

\usepackage{color}
\usepackage{amsfonts}
\usepackage{amsmath}
\usepackage{amsxtra}
\usepackage{latexsym}
\usepackage[mathcal]{eucal}
\usepackage{enumerate}
\usepackage{ifsym}
\usepackage{yfonts}
\usepackage{calc}
\usepackage{tikz}
\usepackage{MnSymbol}
\usepackage[numbers]{natbib}

\newtheorem{theorem}{Theorem}[section]
\newtheorem{fact}[theorem]{Fact}
\newtheorem{corollary}[theorem]{Corollary}
\newtheorem{lemma}[theorem]{Lemma}
\newtheorem{proposition}[theorem]{Proposition}

\newtheorem{question}[theorem]{Question}

\theoremstyle{definition}
\newtheorem{definition}[theorem]{Definition}
\theoremstyle{remark}
\newtheorem{remark}[theorem]{Remark}
\newtheorem{example}[theorem]{Example}

\newcommand{\ben}{\begin{enumerate}}
\newcommand{\een}{\end{enumerate}}
\newcommand{\bit}{\begin{itemize}}
\newcommand{\eit}{\end{itemize}}

\def\R {{\mathbb R}}
\def\Q {{\mathbb Q}}

\def\C {{\mathbb C}}
\def\N{{\mathbb N}}
\def\T{{\mathbb T}}

\def\Z {{\mathbb Z}}

\def\F{{\mathbb F}}

\def\H{{\mathcal H}}

\def\QED{\nobreak\quad\ifmmode\roman{Q.E.D.}\else{\rm Q.E.D.}\fi}

\def\H(L)M{hereditarily  (locally) minimal}
\def\D(L)M{densely (locally) minimal}
\def \HM {hereditarily   minimal}

\def\DM{densely minimal}
\def\CM{c-minimal}
\def\CTM{c-totally minimal}

\def \HM {hereditarily   minimal}

\def\PSL{\operatorname{PSL}}

\def\SL{\operatorname{SL}}
\def\soc{\operatorname{soc}}
\DeclareMathOperator{\SO}{SO}
\DeclareMathOperator{\ISO}{Iso}

\begin{document}

\title[]{C-Minimal topological groups}

\author[]{Wenfei Xi}
\address[W. Xi]{\hfill\break
	School of Applied Mathematics
	\hfill\break
	Nanjing University of Finance \& Economics
	\hfill\break
	Wenyuan Road No. 3, 210046 Nanjing
	\hfill\break
	China}
\email{xiwenfei0418@outlook.com}

\author[]{Menachem Shlossberg}
\address[M. Shlossberg]
{\hfill\break Mathematics Unit
	\hfill\break Shamoon College of Engineering
	\hfill\break 56 Bialik St., Beer-Sheva 84100
	\hfill\break Israel}
\email{menacsh@sce.ac.il}

\keywords{{Lie group, minimal group, c-minimal group, hereditarily non-topologizable group, locally solvable group}}

\date{June 27, 2021}
\subjclass[2010]{22A05, 22E20, 22E25, 54H11}

\begin{abstract}
We study topological groups having all closed subgroups (totally) minimal and we call such groups {\it c-(totally) minimal}. We show that a  locally compact c-minimal connected group is compact. Using a well-known theorem of Hall and  Kulatilaka \cite{HK} and a characterization of a certain class of Lie groups, due to Grosser and Herfort \cite{GH}, we prove that a \CM\ locally solvable Lie group is compact.
	
It is  shown that if a topological group  $G$ contains a compact open normal subgroup $N$, then $G$  is \CTM\ if and only if  $G/N$ is hereditarily non-topologizable. Moreover, a \CTM\ group that is either complete solvable or strongly compactly covered must be compact. Negatively  answering  \cite[Question 3.10(b)]{DMe} of Dikranjan and Megrelishvili we find, in contrast, a totally minimal solvable (even metabelian) Lie group that is not compact.
We also prove that the group $A\times F$ is c-(totally) minimal for every (respectively, totally) minimal abelian group $A$ and every finite group $F.$
\end{abstract}

\maketitle
\section{Introduction}
All topological groups in this paper are Hausdorff. A topological group $(G, \tau)$ is called {\it minimal}  \cite{Doitch,S71} if there exists no group topology on $G$ that is strictly coarser than $\tau.$  Equivalently, if every continuous isomorphism $f : G\to H,$ where $H$ is an arbitrary topological group,  is a topological isomorphism.  If the same holds for every quotient of $G$, then $G$ is {\it totally minimal} \cite{DP}. This is exactly a group $G$ that satisfies the open mapping theorem. The topological semidirect product $\R \leftthreetimes \R_{+}$, where the multiplicative group of positive reals  $\R_{+}$ acts on $\R$ by multiplication,  is a locally compact  minimal group that is not totally minimal (see \cite{DS79}).
For more information on minimal groups we refer the reader to  \cite{B,DS,EDS,P} (see also the survey \cite{DMe} and the book \cite{DPS}).

\vskip 0.3cm
In 1971, Stephenson proved that a minimal locally compact abelian group is compact (\cite[Theorem 1]{S71}). This result was extended substantially by Prodanov and Stoyanov.

\begin{fact}\label{fac:maispre} {\normalfont\cite{PS}}
Every minimal abelian group is precompact.
\end{fact}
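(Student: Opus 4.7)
The plan is to reduce the statement to a criterion involving essential subgroups and then to exploit the structure theory of locally compact abelian groups.

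First, I would invoke the essential subgroup criterion for minimality of abelian Hausdorff groups: $(G,\tau)$ is minimal if and only if $G$ is an \emph{essential} subgroup of its Raikov completion $\widetilde{G}$, i.e.\ $H\cap G\ne\{e\}$ for every non-trivial closed subgroup $H$ of $\widetilde{G}$. The statement then becomes: whenever $G$ is essential in a complete Hausdorff abelian group $\widetilde{G}$, the completion $\widetilde{G}$ must be compact.

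Next, I would show that $\widetilde{G}$ is locally compact (a separate but known reduction), and then apply the classical structure theorem $\widetilde{G}\cong \R^n\oplus K$, where $K$ has a compact open subgroup. Essentiality of $G$ rules out a non-trivial Euclidean factor: since $\Z^n$ is closed and discrete in $\R^n$, one can produce in $\widetilde{G}$ a non-trivial closed subgroup meeting $G$ trivially unless $n=0$. An analogous but subtler argument, using closed subgroups of $p$-adic type inside the compact open subgroup of $K$ and its discrete quotient, would force $K$ itself to be compact.

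The main obstacle, and the heart of the Prodanov-Stoyanov theorem, is the torsion-free case: ruling out an essential minimal topology on a torsion-free abelian group with non-compact completion. Here one must analyze $\Z_p$-module structures inside $\widetilde{G}$ and construct closed subgroups that evade the dense subgroup $G$; in the original proof this requires an intricate induction on torsion-free rank, together with $\Q$-vector space embeddings. The classical approach of Prodanov and Stoyanov handles this by a careful duality-type argument, which in modern treatments (see \cite{DPS}) is organized around essential subgroups and Pontryagin-like dualities.
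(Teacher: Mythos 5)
The paper does not prove this statement at all: it is quoted as a known Fact with a citation to Prodanov--Stoyanov \cite{PS}, so the only question is whether your sketch would stand on its own. It does not, for two concrete reasons. First, your opening criterion is misstated: for a dense subgroup $G$ of its Raikov completion $\widetilde{G}$, minimality of $G$ is equivalent to ``$G$ is essential in $\widetilde{G}$ \emph{and} $\widetilde{G}$ is minimal,'' not to essentiality alone. Essentiality by itself is vacuous when $G$ is already complete (every group is essential in itself), so your reformulation ``$G$ essential in $\widetilde{G}$ $\Rightarrow$ $\widetilde{G}$ compact'' is false as stated --- discrete $\Z$ is a counterexample. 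The correct reduction is: it suffices to show that a \emph{complete minimal} abelian group is compact.

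Second, and more seriously, the step ``show that $\widetilde{G}$ is locally compact (a separate but known reduction)'' is not a known reduction and cannot be one: if the completion of a minimal abelian group were known to be locally compact, the theorem would collapse to Stephenson's 1971 result (minimal locally compact abelian $\Rightarrow$ compact), which is exactly the weaker statement that Prodanov and Stoyanov had to surpass. The structure theorem $\R^n\oplus K$ is available only for locally compact abelian groups, so your entire second stage assumes what is essentially the conclusion. The genuine content of the Prodanov--Stoyanov theorem lies in handling a complete minimal abelian group with \emph{no} local compactness, which is done via Prodanov's lemma on invariant means (or, in Banakh's recent treatment, a quantitative refinement), together with essentiality arguments against subgroups of type $\Q_p$ and $\R$; your sketch defers precisely this part (``the heart of the theorem'') back to the original sources. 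As it stands, the proposal is an outline of the statement's difficulty rather than a proof, and its two explicit reductions are respectively incorrect and unjustified.
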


Recently, Banakh \cite{Banakh} provided a quantitative generalization of this theorem.

\vskip 0.3 cm
Dikranjan, Toller and the authors introduced the following two concepts in \cite[Definition 1.2]{firstpaper}\label{hm} and \cite[Definition 1.5]{secondpaper}, respectively.

\begin{definition}\label{defhmdm}
A topological group $G$ is said to be:\ben \item
{\em hereditarily minimal} if every subgroup of $G$ is minimal;
\item {\em densely minimal} if every  dense subgroup of $G$ is minimal. \een
\end{definition}

It is easy to see that a topological group $G$  is hereditarily minimal if and only if every closed subgroup of $G$ is densely minimal. Prodanov \cite{P} proved that  the $p$-adic integers are the only infinite (locally) compact hereditarily minimal abelian groups. Later, Dikranjan and  Stoyanov \cite{DS} classified all hereditarily minimal abelian groups. In \cite[Theorem D]{firstpaper}\label{thm:hmabc}, all infinite locally compact solvable \HM\ groups  were classified.

\vskip 0.3cm
The next theorem provides another extension of Prodanov's theorem.

\begin{fact}{\normalfont\cite[Theorem C]{secondpaper}}\label{HMDMZP}
For an infinite locally compact abelian group $K$, the following conditions are equivalent:
\ben[(a)]
\item $K$ is a \HM \ group;
\item $K$ is a \DM \ group;
\item $K$ is isomorphic to $\Z_p$ for some prime $p.$\een
\end{fact}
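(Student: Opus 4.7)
The plan is to dispose of two implications quickly and concentrate on the third. Namely, (c) $\Rightarrow$ (a) is exactly Prodanov's classical theorem cited in the paragraph preceding Fact~\ref{HMDMZP}, and (a) $\Rightarrow$ (b) is immediate since every dense subgroup is in particular a subgroup. The real content is (b) $\Rightarrow$ (c).

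For this, I first observe that $K$ is a dense subgroup of itself, so (b) forces $K$ to be minimal; Fact~\ref{fac:maispre} then yields that $K$ is precompact, and combined with local compactness, $K$ is compact. Since Prodanov's theorem classifies the infinite compact hereditarily minimal abelian groups as exactly the groups $\Z_p$, it suffices to upgrade (b) to (a) in the compact setting: show that every subgroup $H \leq K$ is minimal. Writing $L := \overline{H}$, the subgroup $H$ is dense in $L$, so the task reduces to the stability statement that every closed subgroup $L$ of a densely minimal compact abelian group $K$ is itself densely minimal.

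My first attempt for this stability would be, given $D$ dense in $L$, to lift $D$ to a dense subgroup $S = D + T$ of $K$ (with $T$ a dense ``transversal'' to $L$ in $K$) satisfying $S \cap L = D$; minimality of $S$ coming from (b), together with total minimality of $L$ as a closed subgroup of the compact group $K$, should then transfer minimality down to $D$. If the abstract reduction proves troublesome, a structural fallback uses Pontryagin duality and explicit constructions: a dense cyclic subgroup of $\T$ generated by an irrational element is torsion-free, hence not essential in $\T$, and therefore not minimal, so (b) rules out any nontrivial connected component in $K$; analogous constructions exclude profinite abelian groups whose support involves more than one prime, non-topologically-monothetic pro-$p$ groups, and pro-$p$ groups with torsion, forcing $K \cong \Z_p$.

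The main obstacle is precisely the closed-subgroup stability of dense minimality---or, in the fallback approach, the explicit production of dense non-minimal subgroups in every infinite compact abelian group $K \not\cong \Z_p$. The delicate case is the pro-$p$ groups such as $\Z_p \times \Z_p$ or $\prod_{n \in \N} \Z/p\Z$, where one must exhibit a dense subgroup that misses some specific nontrivial closed subgroup (equivalently, that fails to be essential) in order to contradict (b).
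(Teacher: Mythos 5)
The paper does not prove this statement at all: it is imported verbatim as Fact~\ref{HMDMZP} from \cite[Theorem C]{secondpaper}, so there is no in-paper argument to compare with and your proposal has to stand on its own. The easy parts of it do: (c)$\Rightarrow$(a) is Prodanov's theorem, (a)$\Rightarrow$(b) is trivial, and your reduction of (b)$\Rightarrow$(c) to the compact case is correct ($K$ is dense in itself, hence minimal, hence precompact by Fact~\ref{fac:maispre}, and a precompact locally compact group is compact). The genuine gap is the heart of (b)$\Rightarrow$(c): that an infinite \emph{compact} abelian densely minimal group must be $\Z_p$, which you reduce to the claim that dense minimality passes to closed subgroups and then leave open. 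Your first route needs a subgroup $T\le K$ with $S=D+T$ dense in $K$ and $S\cap L=D$; nothing guarantees such a ``dense transversal'' exists (the extension $L\le K$ need not split, and arranging density of $S$ while keeping $S\cap L=D$ is exactly as hard as the original problem), so the stability lemma is not established --- and note that proving it directly is essentially equivalent to the theorem, since a posteriori the only infinite compact abelian densely minimal groups are the groups $\Z_p$, whose closed subgroups are trivially densely minimal.

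Your fallback points in the right direction --- by the Prodanov--Stephenson criterion a dense subgroup of a compact group is minimal iff it is essential, so one must produce a dense non-essential subgroup of every infinite compact abelian $K\not\cong\Z_p$ --- but as written it is a list of targets, not a proof, and the reductions you invoke are unjustified. The irrational-rotation argument shows that $\T$ itself is not densely minimal; to conclude that a nontrivial connected component of $K$ contradicts (b) you would again need dense minimality to pass to closed subgroups (or quotients), i.e.\ precisely the missing stability. What is actually required is an explicit construction in $K$ itself of a dense subgroup meeting some nontrivial closed subgroup trivially; for instance, in $K=\Z_p\times\Z_p$ one can take $D=\{(a,f(a)):a\in A\}$, the graph of a homomorphism $f$ on a free subgroup $A$ whose generators are chosen (by a countable induction over the congruence conditions mod $p^k$) to make $D$ dense while $D\cap(\{0\}\times\Z_p)=0$, and similar graph constructions handle $(\Z/p\Z)^{\N}$, mixed-prime products and groups with nontrivial connected quotients. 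Since neither such constructions nor the stability lemma are carried out, the implication (b)$\Rightarrow$(c) remains unproved in your proposal.
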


In this paper, we study the topological groups having all {\em closed} subgroups minimal.

\begin{definition}\label{defcm}
A topological group $G$ is called  {\em c-minimal} if every closed subgroup of $G$ is minimal.
\end{definition}

 Clearly, a compact group is c-minimal. It follows from Fact \ref{fac:maispre} that every closed abelian subgroup  of a complete c-minimal  group $G$ (e.g., the center $Z(G)$)  is compact.
Megrelishvili \cite{MEG08}  proved that every topological group is a group retract of a minimal group. Hence, every topological group is a closed subgroup (and a quotient) of a minimal group.

Note that  both the class of minimal groups and the class of totally minimal groups    are closed under taking closed central subgroups  (see \cite[Proposition 7.2.5]{DPS}). In particular, a minimal abelian group is c-minimal  while this property does not hold even for a two-step nilpotent minimal group, as the next example shows.

\begin{example}\label{nilpo2}
The  Weyl-Heisenberg group $G=H(\R)/Z(H(\R))$, where
\[
H(\R)=\Bigg\{\left(\begin{array}{ccc}
1 & a &  b \\
0 & 1 & c \\
0 & 0 & 1
\end{array}\right)\bigg | \ \ a,b,c \in \R \Bigg\}
\]
is the classical Heisenberg group, is minimal (see \cite[Theorem~5.11]{DMe}). However, $G$ is not c-minimal since it contains a copy of $\R$ as a closed non-minimal subgroup.
\end{example}

The following diagram  summarizes the  interrelations between the classes of minimal groups defined above.
Example \ref{firstdia} shows, among other things, that all inclusions are proper.

\begin{center}
\begin{tikzpicture}
\draw
(-0.8,0) circle (0.7) (0,-1.3)  node at (-1.15,0) {(3)}
(-0.7,0) circle (1.5) (-2.5,-0.5)  node at (-1.8,0) {(2)}
(0.7,0) circle (1.5) (2.5,-0.5)  node at (1.5,0) {(4)}
(0,0) circle (0.7) (0,-1.3)  node at (0.25,0) {(5)}
(90:0.0cm) ellipse (3.5cm and 1.7cm) node at (-2.8,0) {(1)};
\end{tikzpicture}
\end{center}

(1) minimal groups;

(2) c-minimal groups;

(3) compact groups;

(4) densely minimal groups;

(5) hereditarily minimal groups.

\begin{example}\label{firstdia}
\ben [(a)]\
\item
By Bader and Gelander \cite{BG}, the special linear group $\SL(n, \F)$ is totally minimal for every local field $\F$ (see also \cite{MS} for a new independent proof). In particular, $\SL(3, \C)$ is totally minimal. By \cite[Corollary 5.11(3)]{MS}, its dense subgroup $\SL(3, \Q(i))$  fails to be minimal, where $\Q(i):=\{a+bi: a,b\in\Q\}$ is the Gaussian rational field. Moreover, $\SL(3, \C)$ is not c-minimal as it contains $\C$ as a closed subgroup. This shows that the union of (2) and (4) is properly contained in (1).

\item
The rational circle $\Q/\Z$ is minimal precompact abelian group (see \cite[Example 1.1(a)]{DMe}). Being abelian it is also c-minimal. However, this group is neither compact nor hereditarily minimal  (see [11] for the classification of all torsion abelian hereditarily minimal groups).  This shows that the union of (3) and (5) is properly contained in (2). Using Example \ref{example:NHMCM} below one can find  a complete non-compact c-minimal group that is not \HM.
	
\item
Let $G$ be any infinite compact abelian group  that is not isomorphic to $\Z_p$ for any prime $p$. By Fact \ref{HMDMZP}, $G$ is in (3) but not in (4).
	
\item
Any non-closed subgroup of $\Z_p$ is  in (5) but not in (3).

\item
By \cite[Theorem 4.7(a)]{Meg}, the group $K \leftthreetimes K^*$ is  minimal for every non-discrete locally retrobounded division ring  $K$, where $K^* = (K\setminus\{0\},\cdot)$ acts on $K$ by multiplication. In particular, the group $G=(\Q_p,+)\leftthreetimes \Q_p^*$ is minimal.  By \cite[Corollary 2.11]{firstpaper}, $G$ is even densely minimal. However, it is not c-minimal since it contains $\Q_p$ as a closed non-minimal subgroup. This means that (4) is not contained in (2).

\item
By  \cite[Example 3.11]{secondpaper}, the compact two-step nilpotent group
\[
G=\Bigg\{\left(\begin{array}{ccc}
1 & a &  b \\
0 & 1 & c \\
0 & 0 & 1
\end{array}\right)\bigg | \ \ a,b,c \in \Z_p\Bigg\}
\]
is densely minimal but not hereditarily minimal.  Since  every compact group is $c$-minimal, we deduce that a topological group that is simultaneously $c$-minimal and densely minimal  need not be hereditarily minimal. So (5) is properly contained in the intersection of (2) and (4).

\item
By \cite[Corollary 1.12]{firstpaper}, every infinite hereditarily minimal locally compact solvable group $G$ is compact metabelian. In particular, it lies in the intersection of (3) and (5).
\een
\end{example}

\vskip 0.5cm

\subsection{Notation and terminology}
The fields of rationals, reals and complex numbers   are denoted  by $\mathbb{Q},\mathbb{R} $ and  $\mathbb{C}$, respectively, and $\T=\{a\in \C\ : \ |a|=1\}$ denotes the unit circle group.
For a prime number $p$, $\Z_p$ stands for the ring of $p$-adic integers and $\Q_p$ is the field of $p$-adic numbers. We denote by $\Z$ the group of integers, while $\N$ and $\N_{+}$ are its subsets of non-negative integers and positive natural numbers, respectively.

An element  $x$  of a group $G$ is \emph{torsion} if  the subgroup of $G$ generated by $x$,  denoted by  $\langle x\rangle$, is finite. Moreover, $G$ is \emph{torsion} if every element of $G$ is torsion. Let $\mathcal P$ be an algebraic (or set-theoretic) property.  A group is called \emph{locally $\mathcal P$} if every finitely generated subgroup has the property $\mathcal P$. For example, in a locally finite group every finitely generated subgroup is finite. In particular, a locally finite group is torsion.

A group $G$ is \emph{solvable} if  there exist $k\in \N_{+}$ and a subnormal series $$G_0=\{e\}\unlhd G_1\unlhd \cdots \unlhd G_k=G,$$  where $e$ denotes the identity element, such that the quotient group $G_j/G_{j-1}$  is abelian for every $j\in \{1,\ldots, k\}.$ In particular, $G$ is  {\em metabelian}, if $k\leq 2.$
The subgroup $Z(G)$ denotes the \emph{center} of $G,$ and we set $Z_0(G) = \{e\}$ and  $Z_1(G) = Z(G)$.  For $n > 1$, the  \emph{$n$-th center} $Z_n(G)$ is defined as follows:
$$Z_n(G)=\{x\in G: [x,y]\in Z_{n-1}(G)  \text{ for every } y\in G\},$$
where $[x,y]$ denotes the commutator $xyx^{-1}y^{-1}$.  A group is {\em nilpotent} if $Z_n(G) = G$ for some $n\in \N$. In this case, its nilpotency class is the minimum of such $n$. We denote by $G'$ the {\em derived subgroup} of $G$, namely, the subgroup of $G$ generated by all commutators $[x,y]$, where $x,y\in G$.

Let $G$ be a topological group and $H$ be its subgroup. The closure of $H$ in $G$ is denoted by $\overline{H}$, and $c(G)$ is the connected component of $G$. If  $c(G)=\{e\},$ then $G$ is {\em totally disconnected}.  It is known that  the connected component of  a Lie group  is open. A  topological group is called  \emph{complete} if it is complete with respect to its two-sided uniformity, and it is \emph{precompact} if its completion is compact. Finally,  $G$ is \emph{balanced} if it admits a local base at the identity consisting of  neighborhoods invariant under conjugations.

\section{c-minimal Lie groups}
The minimality of Lie groups has been studied by many authors (see \cite{Est,Goto,Omori,RS}).
By Omori \cite{Omori},  a connected nilpotent Lie group  with compact center is minimal.
By Remus and Stoyanov \cite{RS}, a connected semi-simple Lie group is totally minimal if and only if its center is finite. In particular, the special linear groups $\SL(n,\R)$ are totally minimal.

The main goal of this section is to prove that  a \CM\ locally solvable Lie group is compact (see Theorem \ref{comp} below).  We start by recalling that a topological group $G$ is  {\it compactly covered} if every element $g\in G$ is contained in a compact subgroup of $G$. It is easy to see that the class of compactly covered groups is closed under taking quotients and closed subgroups.

\begin{lemma}\label{lem:ccov}
A complete c-minimal group $G$ is compactly covered.
\end{lemma}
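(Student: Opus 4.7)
The plan is to show that every cyclic subgroup of $G$ has compact closure. Fix an arbitrary $g\in G$ and consider the closed subgroup
\[
H:=\overline{\langle g\rangle}\subseteq G.
\]
Since $\langle g\rangle$ is abelian, so is its closure $H$. Moreover, $H$ is a closed subgroup of the c-minimal group $G$, so by Definition \ref{defcm} the group $H$ is minimal. Being a closed abelian minimal group, Fact \ref{fac:maispre} (Prodanov--Stoyanov) implies that $H$ is precompact.

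Next I will upgrade precompactness to compactness using the completeness hypothesis on $G$. Since $G$ is complete with respect to its two-sided uniformity and $H$ is closed in $G$, the subgroup $H$ is itself complete in the induced uniformity. A topological group that is simultaneously complete and precompact coincides with its completion, hence is compact. Therefore $H$ is a compact subgroup of $G$ containing $g$, and since $g$ was arbitrary, $G$ is compactly covered.

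There is no serious obstacle here: the argument is essentially a two-line application of the earlier results, with the only subtle point being the (standard) fact that closed subgroups of complete topological groups are complete. Note that, by the same reasoning, every closed abelian subgroup of $G$ (for instance the center $Z(G)$, as already observed after Definition \ref{defcm}) is actually compact, which is consistent with the conclusion.
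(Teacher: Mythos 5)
Your proof is correct and follows essentially the same route as the paper: apply c-minimality to $\overline{\langle g\rangle}$, invoke Prodanov--Stoyanov (Fact \ref{fac:maispre}) for precompactness, and use completeness of a closed subgroup of the complete group $G$ to conclude compactness. If anything, your version is marginally cleaner, since the paper restricts attention to non-torsion elements while your argument handles all $g\in G$ uniformly.
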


\begin{proof}
Take a non-torsion  element $g\in G$. As $G$ is  c-minimal group, it follows from Fact \ref{fac:maispre} that $\overline{\langle g\rangle}$ is precompact. The completeness of $G$ implies that its subgroup   $\overline{\langle g\rangle}$ is compact. This proves that $G$ is compactly covered.
\end{proof}

As noted above, a compact group is c-minimal.  Using the next reformulated result, we shall  prove in Proposition \ref{prop:cgcomp} that the converse is true for connected locally compact groups.

\begin{fact}{\normalfont\cite[Lemma 2.15]{BWY}}\label{fac:cc}
Let $H$ be a connected locally compact group. If $H$ is compactly covered, then it is compact.
\end{fact}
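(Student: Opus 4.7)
The plan is to reduce to the case of connected Lie groups via the Gleason--Yamabe approximation theorem, and then handle the Lie case through the Levi decomposition together with the Iwasawa decomposition. Being compactly covered is inherited by Hausdorff quotients (an image of a compact subgroup containing a preimage of $g$ is a compact subgroup containing $g$) and by closed subgroups, so both reductions are legitimate.

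For the reduction, by Gleason--Yamabe the connected locally compact group $H$ admits arbitrarily small compact normal subgroups $N$ such that each quotient $H/N$ is a connected Lie group and $H \cong \varprojlim H/N$. If every compactly covered connected Lie group can be shown to be compact, then each such $H/N$ is compact and hence $H$, being a projective limit of compact groups, is itself compact.

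For the Lie-group case, let $L$ be a compactly covered connected Lie group with radical $R$. First, $R$ is again compactly covered, and I would argue that a connected solvable Lie group in which every cyclic subgroup is relatively compact must be a compact torus: the assumption forces every one-parameter subgroup to have compact closure, which on the Lie-algebra level forces the eigenvalues of each $\mathrm{ad}\,X$ to be purely imaginary, and together with solvability this forces the Lie algebra to be compact abelian. Next, $L/R$ is a connected semisimple Lie group that is again compactly covered; applying the Iwasawa decomposition $L/R=KAN$, any non-trivial element of $A$ or $N$ generates an unbounded cyclic subgroup, so $A=N=\{e\}$ and $L/R=K$ is compact. Since $L$ is then an extension of a compact group by a compact group, $L$ itself is compact.

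The main obstacle is the structural claim in the solvable step: that compact coverage of a connected solvable Lie group forces it to be a compact torus. This reduces to a Lie-algebraic analysis of $\mathrm{ad}$-eigenvalues on the radical and is the real content of the argument; the Gleason--Yamabe reduction and the semisimple Iwasawa step are, by comparison, routine.
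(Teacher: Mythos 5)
First, note that the paper does not prove this statement at all --- it is quoted as a Fact with a citation to Bagley--Wu--Yang \cite[Lemma 2.15]{BWY} --- so your proposal cannot be compared to an in-paper argument; it has to stand on its own. The overall architecture is reasonable: compact coverage does pass to Hausdorff quotients and closed subgroups, Gleason--Yamabe does reduce the problem to connected Lie groups (in fact more cheaply than you say: one compact normal $N$ with $H/N$ Lie already suffices, since a locally compact group that is compact-by-compact is compact, so no projective limit is needed), and the semisimple step via the Iwasawa decomposition $KAN$ is fine, because $A$ and $N$ are closed and a nontrivial element of $A\cong\R^a$ or of the simply connected nilpotent group $N$ generates a closed infinite discrete subgroup; with $A=N=\{e\}$ the Lie algebra is of compact type and Weyl's theorem gives compactness of $L/R$.

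The genuine gap is exactly in the step you yourself single out as the real content. The inference ``every $\operatorname{ad}X$ has purely imaginary eigenvalues $+$ solvable $\Rightarrow$ the Lie algebra is compact abelian'' is false as stated: in the Heisenberg algebra every $\operatorname{ad}X$ is nilpotent, so all eigenvalues are $0$ (purely imaginary), the algebra is solvable but not abelian, and the simply connected Heisenberg group is noncompact. What relative compactness of the one-parameter group $\{\operatorname{Ad}(\exp tX)\}$ actually gives is more: $\operatorname{ad}X$ must be \emph{semisimple} with purely imaginary spectrum. With semisimplicity the argument can be repaired: by Lie's theorem $\operatorname{ad}$ of the derived algebra is nilpotent, hence zero, so $[\mathfrak g,\mathfrak g]$ is central, the algebra is nilpotent, and then every $\operatorname{ad}X$ is both nilpotent and semisimple, hence zero, so $\mathfrak g$ is abelian. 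Two further small points need to be supplied: (i) compact coverage is an assumption about cyclic subgroups, so you must check that it forces one-parameter subgroups to have compact closure (the closure of $\{\exp tX\}$ is of the form $\R^a\times\T^b$ with a dense one-parameter image, so $a\le 1$, and $a=1$ would make the powers of $\exp X$ unbounded); and (ii) an abelian Lie algebra only gives $R\cong\R^a\times\T^b$, so you must return to the group level and use compact coverage once more to kill the vector factor --- the Lie-algebra computation alone cannot yield compactness.
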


\begin{proposition}\label{prop:cgcomp}
If $G$ is a locally compact $c$-minimal group, then $c(G)$ is compact.  In particular, a connected locally compact $c$-minimal group   is compact.
\end{proposition}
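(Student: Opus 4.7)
The plan is to apply the two results stated immediately before the proposition (Lemma \ref{lem:ccov} and Fact \ref{fac:cc}) to the closed subgroup $c(G)$. First I would observe that c-minimality passes to closed subgroups: if $H$ is a closed subgroup of a c-minimal group $G$, then any closed subgroup of $H$ is also closed in $G$, hence minimal; so $H$ is itself c-minimal. Applied to $H = c(G)$ (which is always closed in $G$), this shows that $c(G)$ is a c-minimal group.

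Next I would note the standard fact that a closed subgroup of a locally compact group is locally compact, and that every locally compact group is complete in its two-sided uniformity. Therefore $c(G)$ is a complete c-minimal group. By Lemma \ref{lem:ccov}, $c(G)$ is compactly covered. Since $c(G)$ is connected and locally compact, Fact \ref{fac:cc} yields that $c(G)$ is compact.

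The \emph{in particular} statement is immediate: if $G$ itself is connected, then $c(G) = G$, so $G$ is compact. The only non-routine observation is the hereditary behavior of c-minimality on closed subgroups, and this is essentially tautological from Definition \ref{defcm}. I do not expect any genuine obstacle; the proposition is a clean combination of Lemma \ref{lem:ccov} with Fact \ref{fac:cc}, the role of the hypothesis of local compactness being to supply completeness (for Lemma \ref{lem:ccov}) and the connected-locally-compact setting (for Fact \ref{fac:cc}).
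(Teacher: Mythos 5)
Your proof is correct and follows essentially the same route as the paper: apply Lemma \ref{lem:ccov} together with Fact \ref{fac:cc} to the closed connected subgroup $c(G)$, using that local compactness supplies completeness. The only cosmetic difference is that you apply Lemma \ref{lem:ccov} to $c(G)$ directly (after noting that c-minimality passes to closed subgroups), whereas the paper gets the compactly covered property of $c(G)$ from that of $G$; both versions are fine.
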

\begin{proof}
By Lemma \ref{lem:ccov}, the closed subgroup $c(G)$ is compactly covered. Being also locally compact and connected, it must be compact by Fact \ref{fac:cc}. The last assertion now trivially follows.
\end{proof}

\begin{lemma}\label{lem:liebal}
Let $G$ be  a locally compact  group that is either balanced or Lie. If $G$ is c-minimal, then it contains a compact open normal subgroup.
\end{lemma}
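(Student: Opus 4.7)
The plan is to split the argument according to which of the two hypotheses is assumed, but in both cases Proposition \ref{prop:cgcomp} is the starting point: it guarantees that $c(G)$ is a compact normal subgroup of $G$. If $G$ is a Lie group, then $c(G)$ is automatically open in $G$ (Lie groups are locally Euclidean, hence locally connected), so $c(G)$ itself already furnishes the required compact open normal subgroup and the Lie case is settled with no further work.

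The balanced case is the one that will require genuine effort. I would pass to the quotient $\pi\colon G\to G/c(G)$, which is locally compact and totally disconnected (a standard fact about locally compact groups) and which remains balanced, since the SIN property descends to Hausdorff quotients. Van Dantzig's theorem then supplies a compact open subgroup $L$ of $G/c(G)$, and the crucial move will be to replace $L$ by a compact open \emph{normal} subgroup. My plan is to exploit the SIN property to choose a conjugation-invariant open neighborhood $V$ of the identity with $V\subseteq L$; then for every $g$ one has $gVg^{-1}=V\subseteq L$, which rearranges to $V\subseteq g^{-1}Lg$, and hence the normal core $N:=\bigcap_{g}gLg^{-1}$ contains the open set $V$. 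Being a subgroup that contains an open neighborhood of the identity, $N$ is open; being closed in the compact set $L$, it is compact; and it is normal by construction.

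To finish, I would pull back and set $K:=\pi^{-1}(N)$: the subgroup $K$ is clearly open and normal in $G$, and it fits into a short exact sequence $1\to c(G)\to K\to N\to 1$ with compact kernel and compact quotient, which forces $K$ itself to be compact. The hard part of the argument is precisely the passage from a compact open subgroup to a compact open \emph{normal} one in the totally disconnected SIN quotient; this is where the balanced hypothesis is used in an essential way, since it is what guarantees that all conjugates of a compact open subgroup share a common open piece and hence that the normal core is open rather than merely closed.
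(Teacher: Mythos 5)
Your argument is correct and takes essentially the same route as the paper: compactness of $c(G)$ via Proposition \ref{prop:cgcomp}, openness of the compact normal subgroup $c(G)$ in the Lie case, and in the balanced case passing to the totally disconnected balanced quotient $G/c(G)$ and pulling back a compact open normal subgroup along the quotient map. The only difference is that where the paper simply invokes the standard fact that a locally compact totally disconnected balanced group has a base of compact open \emph{normal} subgroups, you re-derive it from van Dantzig's theorem plus the normal-core argument using an invariant neighborhood $V\subseteq L$ — a correct filling-in of that cited detail.
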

\begin{proof}	
Let $G$ be a locally compact  c-minimal  group. By Proposition \ref{prop:cgcomp}, the connected component $c(G)$ is compact. In case $G$ is Lie, then its compact normal subgroup  $c(G)$  is open.

In case $G$ is balanced, then $G/c(G)$ is a locally compact  totally disconnected balanced group. Hence, the latter group has a local base at the identity consisting of compact open normal subgroups. Pick any $K$ from this local base. As $c(G)$ is compact normal subgroup of $G$, it follows that $N=q^{-1}(K)$ is a compact open normal subgroup of $G$, where  $q: G \to G/c(G)$ is the quotient homomorphism.
\end{proof}

\begin{proposition}\label{corollary:abfi}
Let $G$ be a c-minimal Lie group. Then $G$ satisfies the following properties:
\ben
\item $c(G)$ is compact;
\item every abelian subgroup of $G/c(G)$ is finite.
\een
\end{proposition}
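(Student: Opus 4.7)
My plan is as follows.

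For Part (1), the conclusion is immediate from Proposition \ref{prop:cgcomp}: a Lie group is locally compact, so the c-minimality of $G$ forces $c(G)$ to be compact. For Part (2), given an abelian subgroup $\bar A \leq G/c(G)$, I would pass to its preimage $A := q^{-1}(\bar A)$ under the quotient map $q : G \to G/c(G)$. Since $G$ is Lie, $c(G)$ is open in $G$, so $G/c(G)$ is discrete; consequently $\bar A$ is closed in $G/c(G)$, and $A$ is a closed subgroup of $G$. Then $A$ is itself a Lie group and is c-minimal (both properties pass to closed subgroups, as ``closed in closed is closed''), with $c(A) = c(G)$ compact, open, and normal in $A$, and $A/c(A) \cong \bar A$.

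I would first show that $\bar A$ is torsion. For any $a \in A$, the closure $\overline{\langle a \rangle}$ is a closed abelian subgroup of $G$, hence minimal by c-minimality and precompact by Fact \ref{fac:maispre}; by completeness of the locally compact group $G$, it is in fact compact. Its image under $q$ is then a compact subgroup of the discrete group $\bar A$, hence finite, so $\bar a$ has finite order. To upgrade ``torsion'' to ``finite'', the plan is to show that $\bar A$, with its discrete topology, is itself minimal as a topological group; a second application of Fact \ref{fac:maispre} then yields that $\bar A$ is precompact, and being discrete it must be finite. The minimality of $\bar A = A/c(A)$ is to be derived from the minimality of $A$ through the compact-open-normal structure of $c(A)$ in $A$: any strictly coarser Hausdorff group topology $\tau'$ on $\bar A$ should lift to a strictly coarser Hausdorff group topology on $A$, using a neighborhood base at the identity consisting of sets of the form $W \cdot q^{-1}(V)$, where $W$ ranges over original open neighborhoods of $e$ in $c(A)$ and $V$ over $\tau'$-neighborhoods of $\bar e$ in $\bar A$. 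This would contradict the minimality of $A$.

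The main obstacle is verifying that this candidate topology on $A$ really is a strictly coarser, Hausdorff group topology. Because $c(A)$ is open, the local topology of $A$ at the identity is rigidly determined by the topology of $c(A)$ and cannot itself be coarsened; the coarsening must happen only at the level of connected components, which makes the verification delicate. Checking the group-topology axioms and Hausdorffness requires using that the conjugation action of $A$ on the compact connected Lie group $c(A)$ factors through $\bar A$ (since $c(A)$ is normal and the outer automorphism group of a compact connected Lie group is discrete), and that this factored action remains continuous with respect to $\tau'$. Once this technical step is in place, the conclusion that $\bar A$ is finite follows immediately from Fact \ref{fac:maispre} combined with the discreteness of $\bar A$.
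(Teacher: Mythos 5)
There is a genuine gap at the crux of your argument for part (2). Part (1) and the observation that $\bar A=A/c(A)$ is torsion are fine, but the step ``torsion $\Rightarrow$ finite'' is made to rest on the claim that the discrete abelian group $\bar A$ is minimal, deduced by lifting a hypothetical coarser Hausdorff group topology $\tau'$ on $\bar A$ to one on $A$. First, note that for a discrete abelian group, minimality is \emph{equivalent} to finiteness (Fact \ref{fac:maispre} plus discreteness), so you have not reduced the problem --- you have restated it; everything hinges on the lifting construction. Second, that construction does not work as written: since every $\tau'$-neighborhood $V$ of the identity in $\bar A$ contains the trivial coset, one has $q^{-1}(V)\supseteq c(A)$, hence $W\cdot q^{-1}(V)=q^{-1}(V)$ and every basic neighborhood of $e$ in your candidate topology contains the whole of $c(A)$; the resulting topology is just the pullback of $\tau'$ and is never Hausdorff when $c(A)\neq\{e\}$. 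A genuine coarsening of the topology of $A$ cannot keep $c(A)$ open, so the ``delicate verification'' you defer is precisely the unsolved part; moreover, your supporting remark that the conjugation action of $A$ on $c(A)$ factors through $\bar A$ is false unless $c(A)$ is abelian (inner automorphisms of $c(A)$ do not vanish), and even granting a continuous action one would need the extension to split to assemble a semidirect-product-type topology. The underlying question --- whether (c-)minimality of a group with a compact open normal subgroup $N$ forces (hereditary) minimality of $G/N$ --- is exactly the kind of statement the paper leaves open in Section \ref{sec:open}, so it cannot be taken for granted here.

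For comparison, the paper's proof avoids this entirely: it quotes \cite[Theorem 3.2]{GH}, which says that a Lie group satisfies properties (1)--(2) \emph{if and only if} all of its abelian subgroups are relatively compact, and then observes that in a c-minimal Lie group the closure $\overline H$ of any abelian subgroup $H$ is a closed abelian, hence minimal, hence (by Fact \ref{fac:maispre} and local compactness) compact subgroup. The nontrivial Lie-theoretic content you are trying to reprove by hand is exactly what the Grosser--Herfort theorem supplies; if you wish to keep a self-contained argument, you would need a genuinely different mechanism (for instance the structure theory used in \cite{GH}) rather than the topology-lifting step, which as it stands fails.
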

\begin{proof}
By \cite[Theorem 3.2]{GH},  a Lie group $G$ satisfies properties $(1)$-$(2)$ if and only if all of its abelian  subgroups are relatively compact. Now, if $H$ is an abelian subgroup of a c-minimal Lie group $G$, then $\overline{H}$ is minimal.  Being locally compact minimal abelian, $\overline{H}$ must be compact in view of Fact \ref{fac:maispre}.
\end{proof}

The next example, provided by D. Dikranjan and simplified on the advice of  the referee,  shows that a Lie group satisfying properties $(1)$-$(2)$ of Proposition \ref{corollary:abfi}  need not be minimal.  Recall that a group  is called {\it topologizable} if it admits a non-discrete Hausdorff group topology and {\it non-topologizable} otherwise.

\begin{example}\label{ex:dik}
Let $G=(T, \tau_d)$, where $T$ is a topologizable Tarski monster group (such groups can be found in \cite{KOO,nontopo}) equipped with the discrete topology $\tau_d$. Then $G$ is a Lie group with $c(G)=\{e\}$ compact, and all abelian subgroups of $G/c(G)=G$ are finite cyclic. However, $G$ is not minimal since $T$ is  topologizable.
\end{example}

The next fact, which deals with infinite discrete hereditarily minimal groups, will also be used in the sequel.

\begin{fact}\label{fac:dhm}{\normalfont\cite[Lemma 3.5]{firstpaper}}
If $G$ is an infinite discrete \HM\ group, then the abelian subgroups of $G$ are finite.
In particular, the center of $G$ is finite, $G$ is torsion but it is neither locally finite nor locally solvable.	
\end{fact}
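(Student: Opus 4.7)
The plan is to attack each assertion in turn, with the first one serving as the engine for everything else.

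First I would prove that every abelian subgroup $A\le G$ is finite. Since $G$ is discrete, $A$ carries the discrete topology, and hereditary minimality of $G$ makes this discrete topology on $A$ minimal. By the Prodanov--Stoyanov theorem (Fact \ref{fac:maispre}), the discrete minimal abelian group $A$ is precompact, and a precompact discrete group is necessarily finite. In particular, the center $Z(G)$ is finite. Moreover, for any $g\in G$ the cyclic subgroup $\langle g\rangle$ is abelian, hence finite, so $g$ is torsion and therefore $G$ is torsion.

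Next I would rule out local finiteness by invoking the theorem of Hall and Kulatilaka \cite{HK} (cited in the introduction): every infinite locally finite group contains an infinite abelian subgroup. If $G$ were locally finite, this would produce an infinite abelian subgroup of $G$, contradicting the first assertion. Hence $G$ is not locally finite.

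Finally, to exclude local solvability, I would use the well-known fact that a finitely generated torsion solvable group is finite, proved by induction on the derived length: the abelian case is immediate, and in the inductive step, if $H$ is finitely generated, torsion and solvable of derived length $n$, then $H/H^{(n-1)}$ is finite by induction, so $H^{(n-1)}$ has finite index in the finitely generated group $H$ and is therefore finitely generated, torsion and abelian, hence finite; thus $H$ itself is finite. Consequently, a torsion locally solvable group is locally finite. Since $G$ is torsion, if $G$ were locally solvable it would be locally finite, contradicting the previous step. So $G$ is not locally solvable.

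The main conceptual step is the first one: once one notices that a discrete precompact group must be finite, Prodanov--Stoyanov instantly forces all abelian subgroups to be finite. The remaining obstacle is really only locating the right structural black box, namely Hall--Kulatilaka, to pass from ``no infinite abelian subgroup'' to ``not locally finite''; with that in hand, the torsion-solvable-implies-locally-finite induction is entirely routine.
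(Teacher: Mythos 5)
Your proposal is correct and follows essentially the same route as the proof of the cited result (\cite[Lemma 3.5]{firstpaper}): Prodanov--Stoyanov forces every (discrete) abelian subgroup to be precompact hence finite, Hall--Kulatilaka then rules out local finiteness, and the torsion-locally-solvable-implies-locally-finite step (your routine induction, which is \cite[Proposition 1.1.5]{DIX} used elsewhere in this paper) rules out local solvability. Note that the present paper only cites this statement as a Fact rather than reproving it, but your argument is exactly the intended one.
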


By Fact \ref{fac:dhm}, a locally solvable discrete \HM\ group is finite. One of the key ingredients in the proof of Theorem \ref{comp}, which extends this result, is the following classical theorem of Hall and  Kulatilaka.

\begin{fact}{\normalfont\cite{HK}}\label{abinside}
Every infinite locally finite group contains an infinite abelian subgroup.
\end{fact}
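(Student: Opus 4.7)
The plan is to argue by contradiction: assume $G$ is an infinite locally finite group containing no infinite abelian subgroup, and derive a contradiction.

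First I would reduce to the case that $G$ is an infinite locally finite $p$-group for a single prime $p$. Let $\pi$ be the set of primes dividing orders of elements of $G$. If $\pi$ is infinite, a direct limit construction inside the finite subgroups of $G$ (using their Sylow decompositions to pull out commuting elements of pairwise coprime prime-power orders) produces an infinite abelian subgroup of $G$ on the nose, contradicting the hypothesis. So $\pi$ is finite; invoking Sylow theory for locally finite groups (Kegel--Wehrfritz), some prime $p \in \pi$ must yield an infinite locally finite $p$-subgroup $P \le G$, and we replace $G$ by $P$.

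Second, I would invoke the classical quantitative lemma that every finite $p$-group of order $p^n$ contains an abelian subgroup of order at least $p^{f(n)}$ with $f(n) \to \infty$ as $n\to\infty$; one may take $f(n) = \Omega(\sqrt{n})$ by iterating the fact that any nontrivial finite $p$-group has nontrivial center. Since $G$ is now an infinite locally finite $p$-group, it has finite subgroups of arbitrarily large order, and hence finite abelian subgroups of unbounded order.

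Third, and most delicate, I would upgrade the existence of unbounded finite abelian subgroups to a single \emph{infinite} abelian subgroup via a centralizer/maximality argument. Take a maximal abelian subgroup $A \le G$ by Zorn's lemma; then $C_G(A) = A$, since any element commuting with $A$ would enlarge $A$. If $A$ were finite, then $N_G(A)/A$ embeds into $\mathrm{Aut}(A)$ and is finite, so $N_G(A)$ is finite. One then argues, using step two inside a sufficiently large finite subgroup containing both $A$ and some large abelian $B$, that either $A$ can be enlarged (contradicting maximality) or a uniform bound on $|G|$ emerges (contradicting that $G$ is infinite); hence $A$ must be infinite, which is the desired contradiction.

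The main obstacle is this third step: the large finite abelian subgroups produced in step two are not automatically nested, and maximal abelian subgroups of a locally finite $p$-group need not be conjugate, so the centralizer/normalizer analysis has to be performed inside carefully chosen large finite subgroups before one passes to the direct limit. This is the technical heart of the Hall--Kulatilaka argument; by contrast, the first two steps are essentially reductions to classical finite-$p$-group folklore.
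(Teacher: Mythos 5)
This Fact is quoted in the paper from Hall--Kulatilaka \cite{HK} without proof, so the only meaningful comparison is with the original argument; measured against that, your sketch has genuine gaps at every stage where the real difficulty lies. In the case where infinitely many primes occur, the claim that one can ``pull out commuting elements of pairwise coprime prime-power orders \ldots on the nose'' is unjustified: elements of coprime orders in a finite group need not commute (already in $S_3$), and producing an \emph{infinite pairwise commuting} family across the finite subgroups is precisely the content of the theorem, not a formal direct-limit manoeuvre. In the case where only finitely many primes occur, a bounded-order argument does show that for some $p$ the finite $p$-subgroups have unbounded order, but this does not yield an infinite $p$-subgroup: maximal $p$-subgroups of a locally finite group need not be conjugate, need not have the same cardinality, and unbounded finite $p$-subgroups need not be cofinally nested, so ``Sylow theory for locally finite groups'' does not deliver the reduction. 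Indeed, the statement ``$\pi(G)$ finite $\Rightarrow$ $G$ has an infinite $p$-subgroup'' is, as far as is known, a \emph{consequence} of the Hall--Kulatilaka theorem (via the primary decomposition of an infinite abelian subgroup), so invoking it as a first step is circular. Finally, your third step is conceded to be incomplete: from a finite maximal abelian $A$ with $C_G(A)=A$ and $N_G(A)$ finite one gets no bound on $|G|$ and no enlargement of $A$; the ``either \ldots or \ldots'' alternative is asserted, not proved.

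Two contextual corrections. The locally finite $p$-group (more generally locally nilpotent) case is true and classical, but its standard proof is not the naive centralizer argument: one shows that if all abelian subgroups are finite (hence satisfy the minimal condition) then the group is a \v{C}ernikov group, whose infinite divisible abelian part gives the contradiction. More importantly, the general locally finite case is \emph{not} obtained by reduction to $p$-groups: the technical heart of Hall and Kulatilaka's proof (and of Kargapolov's independent proof) is an application of the Feit--Thompson odd order theorem, and no elementary proof avoiding it is known. So the overall strategy you propose---two ``folklore'' reductions plus a finite $p$-group lemma---cannot be repaired locally; it misses the essential input of the theorem.
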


\begin{theorem}\label{comp}
If $G$ is a \CM\ locally solvable Lie group, then $G$ is compact.
\end{theorem}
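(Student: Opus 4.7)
The plan is to reduce the problem to showing that the discrete quotient $G/c(G)$ is finite, and then rule out infinitude via the Hall--Kulatilaka theorem. By Proposition \ref{corollary:abfi}, $c(G)$ is compact and every abelian subgroup of $G/c(G)$ is finite. Since $G$ is a Lie group, $c(G)$ is open in $G$, so $G/c(G)$ carries the discrete topology; hence it is enough to show that $G/c(G)$ is finite.

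The key step is to verify that $G/c(G)$ is locally finite. Being a Lie group, $G$ is locally compact, hence complete, so Lemma \ref{lem:ccov} yields that $G$ is compactly covered. This property passes to quotients, so the discrete group $G/c(G)$ is compactly covered as well; in a discrete group every compact subgroup is finite, so $G/c(G)$ is a torsion group. Local solvability is also preserved under quotients, so $G/c(G)$ is locally solvable. I would then invoke the standard fact that every finitely generated solvable torsion group $H$ is finite, proved by induction on the derived length: the abelianization $H/H'$ is finitely generated, abelian, and torsion, hence finite; therefore $H'$ has finite index in $H$, so $H'$ is itself finitely generated, torsion and solvable of smaller derived length, and the induction applies. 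It follows that every finitely generated subgroup of $G/c(G)$ is finite, i.e., $G/c(G)$ is locally finite.

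Suppose for contradiction that $G/c(G)$ is infinite. Then by Hall--Kulatilaka (Fact \ref{abinside}) the infinite locally finite group $G/c(G)$ contains an infinite abelian subgroup, directly contradicting Proposition \ref{corollary:abfi}. Hence $G/c(G)$ is finite and, together with the compactness of $c(G)$, we conclude that $G$ is compact. The main technical obstacle is the passage from torsion and locally solvable to locally finite, for which the induction outlined above is the only nonobvious ingredient; everything else is an assembly of results already available in the paper.
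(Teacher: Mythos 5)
Your proposal is correct and follows essentially the same route as the paper: reduce to showing the quotient $G/c(G)$ is finite, establish that it is torsion, locally solvable and hence locally finite, and then invoke Hall--Kulatilaka against Proposition \ref{corollary:abfi}(2). The only differences are cosmetic: the paper gets torsionness directly from Proposition \ref{corollary:abfi}(2) (cyclic subgroups are abelian, hence finite) rather than via Lemma \ref{lem:ccov}, and it cites \cite[Proposition 1.1.5]{DIX} for the passage from torsion locally solvable to locally finite, which you instead prove by your (correct) induction on derived length.
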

\begin{proof}
As a quotient of a locally solvable group is still locally solvable (see page 2 of \cite{DIX}), we deduce that $G/c(G)$ is locally solvable. By Proposition \ref{corollary:abfi}(2), the factor group $G/c(G)$ is torsion. So, it is locally finite according to \cite[Proposition 1.1.5]{DIX}.
Since $c(G)$ is compact by Proposition \ref{corollary:abfi}(1),  it suffices to prove that  $G/c(G)$ is finite. Assume for a contradiction that $G/c(G)$ is  infinite. In view of Fact \ref{abinside},  it  must contain an infinite abelian subgroup. This contradicts Proposition \ref{corollary:abfi}(2).
\end{proof}

\section{c-totally minimal groups}
\begin{definition}\label{def:ctm}
Call a topological group $G$ {\em c-totally minimal} if every closed subgroup of $G$ is totally  minimal.
\end{definition}

The main results of this section are Theorem \ref{pro:procmin} and Theorem \ref{thm:ballie}. Using Theorem \ref{pro:procmin}, one can produce many precompact groups  which are c-(totally) minimal and are not (necessarily)  abelian or compact. In Theorem \ref{thm:ballie}, we characterize the c-totally minimal groups having a compact open normal subgroup. We start by recalling some related concepts.

\begin{definition}\label{defhtmdtm}
A topological group $G$ is said to be:
\ben
\item \cite[Definition 6.15]{firstpaper} {\em hereditarily totally minimal} if every subgroup of $G$ is totally minimal;
\item \cite[Definition 1.8]{secondpaper} {\em densely totally minimal} if every  dense subgroup of $G$ is totally minimal.
\een
\end{definition}

Now we give  examples of  densely totally minimal groups that are not c-minimal.

\begin{example}\label{tmdtmctmhtm}
\ben \
\item
Consider the projective special linear group $\PSL(n, \R)= \SL(n, \R)/ Z$, where $\SL(n, \R)$ is the special linear group with center  $Z= \{c\operatorname{I}:  c^n=1\}$. By \cite[Theorem 2.4]{RS}, the group $\PSL(n, \R)$  is  totally minimal. Being also simple (see \cite[Corollary 3.2.9]{Robinson}),  it must be densely totally minimal according to \cite[Lemma 4.4]{secondpaper}. However, since $\PSL(n, \R)$ is a connected non-compact Lie group, it is not c-minimal by Proposition \ref{prop:cgcomp}.

\item
Shelah \cite{Sh} constructed a simple non-topologizable  group $G$ under the assumption of  CH.
Using the same arguments from (1), one can see that $G$ is  densely totally minimal when equipped with the (unique)  discrete topology. Being also torsion free, it is not hereditarily minimal  as discrete hereditarily minimal groups are torsion by Fact \ref{fac:dhm}. Moreover, since discrete c-minimal groups are hereditarily minimal, we deduce that $G$ is not c-minimal.
\een
\end{example}

Clearly, a compact group is c-totally minimal. A hereditarily totally minimal group is both densely totally minimal and c-totally minimal. As the following example shows, the converse is not true in general.

\begin{example}
By \cite[Example 4.6]{secondpaper},  the special orthogonal group $G=\SO(n, \R)$, where $n\geq 5$, is densely totally minimal but not hereditarily minimal. Being compact, $G$ is also c-totally minimal.
\end{example}

 The next fact was originally proved in \cite{EDS}  and we use it several times in the sequel.

\begin{fact}{\normalfont\cite[Theorem 7.3.1]{DPS}}\label{fac:EDS}
If a topological group $G$ contains a compact normal subgroup $N$ such that $G/N$ is (totally) minimal, then $G$ is (resp., totally) minimal.
\end{fact}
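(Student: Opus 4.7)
My plan is to prove the minimal case first and derive the totally minimal case by passing to quotients. Fix a coarser Hausdorff group topology $\sigma \subseteq \tau$ on $G$; the goal is to show $\sigma = \tau$.

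The first task is to compare the induced topologies on $N$ and on $G/N$. Since the identity map $(G,\tau) \to (G,\sigma)$ is continuous, $N$ is $\sigma$-compact, making $\sigma|_N$ a compact Hausdorff topology coarser than the compact Hausdorff topology $\tau|_N$; uniqueness of Hausdorff topologies on a compact Hausdorff space forces $\sigma|_N = \tau|_N$. In particular $N$ is $\sigma$-closed, so the quotient topology $\sigma/N$ on $G/N$ is Hausdorff and coarser than $\tau/N$; the assumed minimality of $(G/N,\tau/N)$ then gives $\sigma/N = \tau/N$.

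The crux is the lifting step: from the two equalities above plus the compactness of $N$ one must conclude $\sigma = \tau$. I would argue via nets. Let $x_\alpha \to e$ in $\sigma$; I will show $x_\alpha \to e$ in $\tau$. Project via $q\colon G \to G/N$: one has $q(x_\alpha) \to eN$ in $\sigma/N = \tau/N$, so for each $\tau$-open neighborhood $V$ of $e$ in $G$, eventually $x_\alpha \in q^{-1}(q(V)) = VN$. Indexing by pairs $(V,\alpha)$ with $V$ ordered by reverse inclusion, one extracts a subnet $x_\beta$ together with elements $n_\beta \in N$ such that $x_\beta n_\beta \to e$ in $\tau$. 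By compactness of $N$, a further subnet yields $n_\beta \to n_0$ in $\tau|_N = \sigma|_N$; consequently $x_\beta = (x_\beta n_\beta)\, n_\beta^{-1} \to n_0^{-1}$ in $\tau$, while simultaneously $x_\beta \to e$ in $\sigma$. Hausdorffness of $\sigma$ forces $n_0 = e$, and hence this sub-subnet satisfies $x_\beta \to e$ in $\tau$. Since every subnet of $\{x_\alpha\}$ admits a sub-subnet converging to $e$ in $\tau$, the whole net does; this gives $\tau \subseteq \sigma$, and therefore $\sigma = \tau$, so $G$ is minimal.

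For the totally minimal case, let $H \unlhd G$ be any closed normal subgroup; it suffices to prove $G/H$ is minimal. The product $NH$ is closed in $G$ because $N$ is compact, so $NH/H$ is a compact normal subgroup of $G/H$. Moreover $(G/H)/(NH/H) \cong G/NH$ is a Hausdorff quotient of the totally minimal group $(G/N,\tau/N)$, hence is minimal. Applying the minimal case just established to $G/H$ with the compact normal subgroup $NH/H$ delivers the minimality of $G/H$, as required. The principal obstacle is the lifting step: compactness of $N$ is used essentially both to make $q$ a proper open map (so that convergence projected to the quotient can be lifted by $N$-multiplications) and to extract a convergent subnet from $\{n_\beta\}$; without compactness the conclusion fails, as witnessed by $\Z \unlhd \R$ with $\R/\Z = \T$ minimal but $\R$ not minimal.
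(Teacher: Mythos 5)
Your argument is correct, but note that the paper does not prove this statement at all: it is quoted as a Fact, with the proof attributed to \cite{EDS} and \cite[Theorem 7.3.1]{DPS}, so there is no in-paper proof to compare against. The standard proof in the cited sources begins exactly as yours does ($\sigma|_N=\tau|_N$ because a coarser Hausdorff topology on a compact Hausdorff space coincides with it, hence $N$ is $\sigma$-closed, and then $\sigma/N=\tau/N$ by minimality of $G/N$), but it finishes by invoking Merson's Lemma \cite[Lemma 7.2.3]{DPS} (which this paper does use elsewhere, in the proof of Theorem \ref{pro:procmin}): if a coarser group topology agrees with the finer one on a subgroup and on the corresponding coset space, the two topologies coincide, with no compactness needed in that step. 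Your net-and-compactness lifting replaces that lemma by a direct, self-contained argument, and it is sound: the subnet indexed by pairs $(V,\alpha)$, the extraction of $n_\beta\to n_0$ in the compact $N$, the identification $n_0=e$ via Hausdorffness of $\sigma$, and the ``every subnet has a sub-subnet converging to $e$'' criterion together do yield $\tau\subseteq\sigma$ at the identity and hence everywhere by homogeneity. What the Merson route buys is a cleaner, reusable lemma in which the lifting is purely formal; what your route buys is elementarity, at the cost of the slightly delicate subnet bookkeeping, which you carry out correctly. Your reduction of the totally minimal case to the minimal case --- $NH$ closed since $N$ is compact and $H$ closed, $NH/H$ compact normal in $G/H$, and $(G/H)/(NH/H)\cong G/NH$ minimal as a Hausdorff quotient of the totally minimal $G/N$ --- is also the standard argument and is correct.
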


Recall  that a subset $H$ of a group $G$ is called {\it unconditionally closed} if $H$ is closed in every Hausdorff group topology of $G$ (see \cite{Markov}).
For example, the center $Z(G)$ is always unconditionally closed.

\begin{theorem}\label{pro:procmin}
Let $G=A\times F$ be a topological direct product, where $A$ is an abelian group and $F$ is a finite group.
\ben
\item If $A$ is minimal, then $G$ is c-minimal.
\item If $A$ is totally minimal, then $G$ is c-totally minimal.
\een
\end{theorem}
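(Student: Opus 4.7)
The plan is to find, for an arbitrary closed subgroup $H\le G=A\times F$, a compact normal subgroup $N\unlhd H$ with (totally) minimal quotient $H/N$, and then to invoke Fact~\ref{fac:EDS}. The natural candidate is $N:=H\cap (\{e\}\times F)$, which is finite (hence compact) and normal in $H$ because $\{e\}\times F$ is normal in $G$. Writing $\pi_A\colon G\to A$ and $\pi_F\colon G\to F$ for the projections, $N$ is the kernel of $\pi_A|_H$, and the main task is to identify $H/N$ with $M:=\pi_A(H)\le A$ as topological groups.

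First I would observe that $K:=H\cap (A\times\{e\})$ is closed in $A$ (since $A\times\{e\}$ is closed in $G$) and that $L:=\pi_F(H)\le F$ is finite. Choosing a representative $(a_l,l)\in H$ for each $l\in L$ yields the decomposition
\[
H=\bigsqcup_{l\in L}(Ka_l)\times\{l\},
\]
so $M=\bigcup_{l\in L}Ka_l$ is a finite union of cosets of the closed subgroup $K$ in $A$ and is therefore itself closed in $A$. Because $F$ is discrete, the above decomposition of $H$ is into clopen pieces, and $\pi_A$ restricts to a homeomorphism from each piece $(Ka_l)\times\{l\}$ onto the coset $Ka_l$ in $A$; hence $\pi_A|_H\colon H\to M$ is an open continuous surjection, and the induced bijection $H/N\to M$ is a topological isomorphism. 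Since $M$ is a closed subgroup of the (totally) minimal abelian group $A$ and every subgroup of an abelian group is central, the stability of (total) minimality under closed central subgroups (\cite[Proposition~7.2.5]{DPS}) guarantees that $M$ is (totally) minimal. Applying Fact~\ref{fac:EDS} to $1\to N\to H\to M\to 1$ then gives that $H$ itself is (totally) minimal, proving both (1) and (2) in a single stroke.

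The only delicate point I anticipate is the openness of $\pi_A|_H$ onto $M$, and this is precisely where the hypothesis that $F$ be finite (and hence discrete) enters in an essential way: it is what makes the sets $A\times\{l\}$ clopen in $G$ and thus makes the partition of $H$ into the pieces $(Ka_l)\times\{l\}$ a partition into clopen subsets on which $\pi_A$ restricts to a homeomorphism. Without the discreteness of $F$, one could not conclude that the continuous bijection $H/N\to M$ is a topological isomorphism, and the bridge to Fact~\ref{fac:EDS} would break down.
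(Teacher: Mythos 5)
Your argument is correct, but it follows a genuinely different route from the paper's. You treat an arbitrary closed subgroup $H\le A\times F$ uniformly: you quotient by the finite normal subgroup $N=H\cap(\{e\}\times F)=\ker(\pi_A|_H)$, use the clopen decomposition $H=\bigsqcup_{l\in L}(Ka_l)\times\{l\}$ (valid because $F$ is finite, hence discrete) to see that $\pi_A|_H$ is open onto $M=\pi_A(H)$, identify $H/N$ topologically with the closed subgroup $M$ of $A$, invoke closedness of (total) minimality under closed central subgroups, and finish with Fact~\ref{fac:EDS}; this handles (1) and (2) in one stroke. The paper instead argues in two stages for (1): first the abelian case, by embedding $H$ as a closed subgroup of the minimal abelian group $A\times p_2(H)$ (itself minimal by Fact~\ref{fac:EDS}), and then the general case via the unconditionally closed center $Z(H)$, the finite index $[H:Z(H)]\le[G:Z(G)]$, and Merson's Lemma; for (2) it uses $M=A\times p_2(H)$, the identification $M'=H'$ (finite), total minimality of the abelian quotient $M/M'$, and Fact~\ref{fac:EDS} again. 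Your approach is shorter and more unified, dispensing with Merson's Lemma, unconditionally closed subgroups and the derived-subgroup computation, at the price of one explicit topological verification: to conclude that $\pi_A|_H$ is open \emph{onto $M$} you should add the one-line remark that each coset $Ka_l$ is open in $M$, since $M$ is a finite union of cosets of the closed subgroup $K$ and distinct cosets are disjoint and closed; with that line in place the induced bijection $H/N\to M$ is indeed a topological isomorphism and the rest goes through. The paper's method, by contrast, stays closer to standard minimality machinery (Merson's Lemma, central subgroups) and yields along the way the auxiliary fact that $A\times p_2(H)$ is (totally) minimal, but both proofs ultimately rest on the same two external inputs: Fact~\ref{fac:EDS} and the stability of (total) minimality under closed central subgroups.
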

\begin{proof}
(1) Let $H$ be a closed subgroup of $G$. If $H$ is also abelian, then $H$ is a subgroup of $A\times p_2(H)$, where $p_2:H\to F$ is the canonical projection on the second coordinate of $H$. By Fact \ref{fac:EDS}, $A\times p_2(H)$ is  minimal since  $p_2(H)$ is a finite normal subgroup of  $A\times p_2(H)$ and $A\cong (A\times p_2(H))/p_2(H)$ is  minimal. Being a closed subgroup of a  minimal abelian group, $H$ is also minimal.
	
Now, let $H$ be a (not necessarily abelian) closed subgroup of $G.$ To prove that $H$ is minimal   let $\sigma\subseteq \tau|_H$ be a coarser Hausdorff group topology on $H$, where $\tau$ is the given product topology on $G.$
Since $Z(H)$ is unconditionally closed in $H$,  and $H$ is $\tau$-closed in $G$ we deduce that $Z(H)$ is a $\tau$-closed abelian subgroup of $G$. By the previous step, $Z(H)$ is minimal. In particular, $\sigma|_{Z(H)}=\tau|_{Z(H)}=(\tau|_H)|_{Z(H)}$. Since $Z(H)$ is unconditionally closed in $H$ and $$[H:Z(H)]\leq [H:Z(G)\cap H]=[HZ(G)/Z(G)]\leq [G/Z(G)]<\infty,$$ we deduce that the quotient topology on $H/Z(H)$ induced by either $\sigma$ or $\tau|_H$ is the discrete topology. By Merson's Lemma (see \cite[Lemma 7.2.3]{DPS}),  $\sigma=\tau|_H$ and we  establish the minimality of $H$.

(2) Let $H$ be a closed subgroup of $G=A\times F$ and suppose that $A$ is totally minimal and $F$ is finite.
As in the proof of (1), one can show that $M=A\times  p_2(H)$ is totally minimal using Fact \ref{fac:EDS}.  This implies that the  quotient group $M/M'$ is totally minimal. Being abelian, the latter group is even c-totally minimal. As $A$ is abelian, we deduce that $M'=H'$. Being   a closed subgroup of a c-totally minimal group, $H/M'=H/H'$ must be totally minimal. By Fact \ref{fac:EDS}, $H$ is totally minimal since $H'$ is finite.
\end{proof}

Note that by Fact \ref{fac:maispre}, the group $A$ from Theorem \ref{pro:procmin} must be precompact, while the group $F$ need not be abelian.

\begin{example}
\ben \
\item Let $A$ be a proper subgroup of $\Q/\Z$ containing  $\soc(\Q/\Z)$, where the socle  $\soc(\Q/\Z)$ is the subgroup generated by all  subgroups of $\Q/\Z$ of prime order. By \cite[Example 3.7]{DMe}, $A$ is minimal but not totally minimal. By Theorem \ref{pro:procmin}(1), $A\times F$ is
c-minimal for every finite group $F.$
\item Consider the topological group $A=(\Z,\tau_p)$, where $\tau_p$ is the p-adic topology for some prime $p$. It is known that $A$ is totally minimal (see \cite[Example 4.3.5]{DPS}).  Theorem \ref{pro:procmin}(2) implies that $A\times F$ is
c-totally minimal for every finite group $F.$ In case  $F$ is also  abelian,  then $A\times F$ is hereditarily totally minimal if and only if
 $F$ is either trivial or a $p$-group (see \cite{DS}).
\een
\end{example}

\begin{proposition}\label{prop:COM}
Let $G$ be a topological group. If there exists a compact open normal subgroup $N$ of $G$ such that the quotient group $G/N$ is hereditarily  (totally) minimal, then $G$ is c-(totally) minimal.
\end{proposition}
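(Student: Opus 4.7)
The plan is to take an arbitrary closed subgroup $H \leq G$ and verify minimality (resp.\ total minimality) of $H$ by applying Fact \ref{fac:EDS} to a suitable compact normal subgroup of $H$, namely $H\cap N$.

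First I would note that since $N$ is a compact \emph{open} normal subgroup of $G$, the quotient $G/N$ is a discrete group. Because $N$ is normal, $HN$ is a subgroup of $G$, and $HN/N$ is a subgroup of $G/N$; in particular, with the subspace topology $HN/N$ is discrete. On the other side of the picture, $H\cap N$ is a compact normal subgroup of $H$ (compact as a closed subgroup of the compact group $N$, and normal because $N$ is normal in $G$), and it is open in $H$, since $N$ is open in $G$. Consequently $H/(H\cap N)$ is also a discrete group.

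Next I would identify $H/(H\cap N)$ with $HN/N$ topologically. The canonical map $H \to HN/N$ obtained by restricting the quotient homomorphism $G\to G/N$ is a continuous surjective homomorphism with kernel $H\cap N$, so it induces a continuous algebraic isomorphism $H/(H\cap N) \to HN/N$. Since both groups are discrete by the previous step, this continuous bijection is automatically a topological isomorphism.

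Now I would feed in the hypothesis: because $G/N$ is hereditarily (totally) minimal, the subgroup $HN/N$ is (totally) minimal, and hence so is $H/(H\cap N)$. Since $H\cap N$ is a compact normal subgroup of $H$, Fact \ref{fac:EDS} yields that $H$ is (totally) minimal. As $H$ was an arbitrary closed subgroup of $G$, this shows that $G$ is c-(totally) minimal, completing the proof.

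I do not foresee a serious obstacle: the only point one has to be careful about is that the algebraic isomorphism $H/(H\cap N) \cong HN/N$ is also a homeomorphism, which here comes for free from the observation that both groups are discrete once $N$ is open.
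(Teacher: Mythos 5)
Your proof is correct and follows essentially the same route as the paper's: pass to $HN/N\leq G/N$, use the hereditary (total) minimality of $G/N$, transfer it to $H/(H\cap N)$ via the isomorphism of these discrete groups, and conclude with Fact \ref{fac:EDS} applied to the compact normal subgroup $H\cap N$ of $H$. The extra care you take to note that discreteness makes the algebraic isomorphism $H/(H\cap N)\cong HN/N$ topological just fills in a detail the paper leaves implicit.
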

\begin{proof}
Let $H$ be a closed subgroup of $G$. Since $G/N$ is a  hereditarily (totally) minimal group, its subgroup $HN/N$ is (totally) minimal.
The  discrete groups $HN/N$  and $H/(H\cap N)$ are  isomorphic. So $H/(H\cap N)$ is also (totally) minimal group with $H\cap N$ compact.  By Fact \ref{fac:EDS}, $H$ is (totally) minimal.
\end{proof}

We now provide an example of locally compact \CTM\ groups that are neither compact nor discrete. It is still open whether there exists a locally compact \HM\ group that is  neither compact nor discrete (see \cite[Question 7.3(1)]{firstpaper}).  On the other hand, we shall see in Proposition \ref{cor:csol} below that a complete  solvable \CTM \ group must be compact.

\begin{example}\label{example:NHMCM}
Recall that a group is called { \em hereditarily non-topologizable} when it is hereditarily totally minimal in the discrete topology. It is known that such groups exist (see \cite{KOO}).	Let $G=N\times T$, where $N$ is a compact group and $T$ is a discrete hereditarily non-topologizable group. Then $G$ is \CTM\ by Proposition \ref{prop:COM}.
\end{example}

The following lemma will be used in the proof of Proposition \ref{cor:csol}.

\begin{lemma}\label{lem:referee}
Let $G_1, G_2$ be two subgroups of a topological group $G$.
\ben \item If  $G_1$ is  normal in $G_2$, then $\overline{G_1}$ is normal in $\overline{G_2}$, where $\overline{G_i}$ denotes the closure of $G_i$ in $G$.\item If, in addition, $G_2/G_1$ is abelian, then so is $\overline{G_2}/\overline{G_1}.$ \een
\end{lemma}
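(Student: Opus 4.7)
The plan is to deduce both items from continuity of the group operations, by passing from discrete algebraic conditions on $G_1, G_2$ to their closures via nets (or, equivalently, by exploiting that conjugation and the commutator map are continuous).

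For part (1), I would first observe that for each fixed $g \in G_2$, the inner automorphism $c_g: G \to G$, $y \mapsto g y g^{-1}$, is a homeomorphism of $G$. Since $G_1$ is normal in $G_2$, we have $c_g(G_1) \subseteq G_1$, hence by continuity $c_g(\overline{G_1}) \subseteq \overline{G_1}$. Thus every element of $G_2$ normalizes $\overline{G_1}$. To extend this to elements of $\overline{G_2}$, pick $x \in \overline{G_2}$ and a net $(x_\alpha) \subseteq G_2$ with $x_\alpha \to x$. For any $h \in \overline{G_1}$, joint continuity of multiplication and inversion gives $x_\alpha h x_\alpha^{-1} \to x h x^{-1}$; since each $x_\alpha h x_\alpha^{-1}$ lies in the closed set $\overline{G_1}$, so does the limit $x h x^{-1}$. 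This establishes normality of $\overline{G_1}$ in $\overline{G_2}$.

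For part (2), once (1) is in hand, $\overline{G_2}/\overline{G_1}$ is a genuine quotient group and it suffices to show that all commutators $[x, y] = x y x^{-1} y^{-1}$ with $x, y \in \overline{G_2}$ lie in $\overline{G_1}$. The commutator map $(x, y) \mapsto [x, y]$ is continuous on $G \times G$. The hypothesis that $G_2/G_1$ is abelian gives $[G_2, G_2] \subseteq G_1 \subseteq \overline{G_1}$. Now for $x, y \in \overline{G_2}$, choose nets $x_\alpha, y_\beta \in G_2$ with $x_\alpha \to x$ and $y_\beta \to y$; then (e.g.\ along the product directed set) $[x_\alpha, y_\beta] \to [x, y]$, and since each such commutator lies in the closed set $\overline{G_1}$, the limit $[x, y]$ does too.

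Neither step presents a real obstacle; the whole lemma is essentially a book-keeping exercise in continuity. The only thing to be careful about is to record that conjugation (equivalently, the commutator) is jointly continuous so that the net arguments in both coordinates go through; this is immediate from the definition of a topological group.
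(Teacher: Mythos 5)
Your proof is correct and follows essentially the same route as the paper: continuity of conjugation and of the commutator map, combined with net arguments, to push the algebraic relations from $G_1, G_2$ to their closures. The only cosmetic difference is that you split part (1) into two steps (first conjugating $\overline{G_1}$ by elements of $G_2$, then passing to $\overline{G_2}$ by a net), whereas the paper handles both variables with nets at once; both are fine.
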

\begin{proof}
(1) Let us see that if $y\in \overline{G_2}$ and $x\in \overline{G_1},$ then $yxy^{-1}\in \overline{G_1}.$ Take nets $(x_i)_{i\in I}$ and $(y_i)_{i\in I}$  in $G_1$ and $G_2$, respectively,  such that $\lim y_i=y$ and $\lim x_i=x$. Then for every $i\in I$ we have  $y_ix_iy_i^{-1}\in G_1$ since $G_1$ is  normal in $G_2.$ As $yxy^{-1}=\lim y_ix_iy_i^{-1}$, we deduce that  $yxy^{-1}\in \overline{G_1}.$ \\
(2) It suffices to show that the commutator $[x,y]\in  \overline{G_1},$ whenever $x\in \overline{G_2}, y\in \overline{G_1}.$ If the  nets $(x_i)_{i\in I}\subseteq G_2$ and $(y_i)_{i\in I}\subseteq G_1$ converge to $x$ and $y$, respectively, then $[x_i,y_i]\in  G_1$ for every $i\in I$ since $G_2/G_1$ is abelian. Being the limit of $[x_i,y_i]$, the commutator $[x,y]$ belongs to  $\overline{G_1}.$
\end{proof}

\begin{proposition} \label{cor:csol}
If $G$ is a  solvable \CTM\ group, then $G$ is precompact. In particular, a complete solvable \CTM \ group is compact.
\end{proposition}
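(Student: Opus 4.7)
The plan is to induct along the derived series of $G$, pass to closures using Lemma \ref{lem:referee}, extract precompactness at each abelian layer from Fact \ref{fac:maispre}, and reassemble $G$ via the three-space property of precompactness.

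Concretely, since $G$ is solvable, choose $k\in \N_+$ with $G^{(k)}=\{e\}$, where $G^{(0)}=G$ and $G^{(i+1)}=[G^{(i)},G^{(i)}]$. Set $H_i:=\overline{G^{(i)}}$ for $0\leq i\leq k$, so that $H_0=G$ and $H_k=\{e\}$. Lemma \ref{lem:referee} then supplies the structural backbone of the argument: each $H_{i+1}$ is a closed normal subgroup of $H_i$, and each quotient $H_i/H_{i+1}$ is abelian. Moreover, every $H_i$ is closed in $G$, and hence totally minimal by the hypothesis that $G$ is \CTM; consequently the Hausdorff quotient $H_i/H_{i+1}$ is again totally minimal (a quotient of a totally minimal group by a closed normal subgroup is totally minimal, since closed normal subgroups of $H_i/H_{i+1}$ correspond to closed normal subgroups of $H_i$ containing $H_{i+1}$). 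Being also abelian, Fact \ref{fac:maispre} forces $H_i/H_{i+1}$ to be precompact.

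Now I descend the tower. Starting from the trivially precompact $H_k=\{e\}$, I apply the three-space property of precompactness for Hausdorff topological groups -- if a closed normal subgroup $N$ and the quotient $G/N$ are both precompact, then so is $G$ -- to propagate precompactness up from $H_{k-1}$ through $H_0 = G$. The ``in particular'' clause is then immediate, since a complete precompact group coincides with its completion and is therefore compact.

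The main potential obstacle is the invocation of the three-space property of precompactness, but this is a classical result proved by passing to the Raikov completion $\widehat{G}$: the closure $\overline{N}^{\widehat{G}}$ is a compact normal subgroup of $\widehat{G}$, and the quotient $\widehat{G}/\overline{N}^{\widehat{G}}$, being the completion of the precompact group $G/N$, is itself compact; hence $\widehat{G}$ is compact and $G$ is precompact. Beyond this, the only delicate bookkeeping is verifying that the derived series remains a subnormal series with abelian factors after closure, which is precisely the content of Lemma \ref{lem:referee}.
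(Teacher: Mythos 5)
Your proof is correct and follows essentially the same route as the paper: close up a solvable series with abelian factors via Lemma \ref{lem:referee}, note each closed term is totally minimal so the abelian factors are minimal and hence precompact by Fact \ref{fac:maispre}, and climb the tower using the three-space property of precompactness (which the paper simply cites from the literature). The only cosmetic differences are your use of the derived series instead of an arbitrary subnormal series and your included sketch of the three-space argument.
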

\begin{proof}
As $G$ is solvable,  there exist $k\in \N$ and a subnormal series $$G_0=\{e\}\unlhd G_1\unlhd \cdots \unlhd G_k=G$$ such that the quotient group $G_j/G_{j-1}$  is abelian for every $j\in \{1,\ldots, k\}.$  By Lemma \ref{lem:referee},  we may assume without loss of generality that the subgroups $G_j$ are closed in $G$.

Since $G_0$ is compact, it suffices to show that if  $G_{j-1}$ is precompact for some  $j\in \{1,\ldots, k\}$, then so is $G_j$.  As $G$ is \CTM,\ the abelian factor group $G_j/G_{j-1}$ is minimal. Using Fact \ref{fac:maispre}, we deduce that $G_j/G_{j-1}$ is precompact. As precompactness is a three space property (see \cite{BT} for example), the subgroup $G_j$ is also precompact, as needed.
\end{proof}

\begin{remark}  Dikarnjan and Megrelishvili asked (see  \cite[Question 3.10(b)]{DMe}) whether  solvable (at least metabelian) totally minimal groups must be precompact. Answering  this question in the negative, we provide in the next  example a metabelian totally minimal Lie group that is not compact. In particular,  it follows that  Proposition \ref{cor:csol} cannot be extended to solvable totally minimal groups. Note that a totally minimal nilpotent group is precompact (see \cite[Theorem 3.11]{DMe}).
\end{remark}

\begin{example}
Mayer \cite[Examples 2.6(i)]{Mayer}  proved  that the Euclidean motion group $\R^n \leftthreetimes \SO(n,\R)$ is totally minimal for every $n>1$, where   the special orthogonal group $\SO(n,\R)$ acts on $\R^n$ by matrix multiplication.
Fixing $n=2$ one obtains  the  Lie group of orientation-preserving isometries of the complex plane \[G:=\ISO_{+}(\C)= \Bigg\{\left(\begin{array}{cc}
a & b \\
0 & 1 \\
\end{array}\right)\bigg | \ \ a \in \T, b\in \C \Bigg\}\cong \C \leftthreetimes \T,\]  where $\T$ acts on $\C$ by multiplication. Note that the total minimality of the non-compact metabelian Lie group $G$ can be established in a different way by  Banakh \cite[Theorem 13]{Banakh1}.
\end{example}

The following lemma was proved in \cite[Lemma 6.19]{firstpaper} only for hereditarily totally minimal  groups. We provide a similar proof here for the sake of completeness.

\begin{lemma}\label{lemma:strong}
If $G$ is a \CTM\ group, then all quotients of  $G$ are \CTM.
\end{lemma}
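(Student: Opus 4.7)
The plan is to use the subgroup correspondence for quotient groups. Let $N$ be a closed normal subgroup of $G$, and let $\pi\colon G\to G/N$ denote the canonical projection. A closed subgroup of $G/N$ has the form $H/N$, where $H=\pi^{-1}(H/N)$ is a closed subgroup of $G$ containing $N$; equivalently, $N$ is a closed normal subgroup of $H$.

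Second, since $G$ is c-totally minimal, the closed subgroup $H$ is itself totally minimal. It then suffices to recall the standard fact that total minimality passes to Hausdorff quotients: if $f\colon H/N\to L$ is a continuous surjective homomorphism onto a Hausdorff topological group $L$, then the composition $f\circ \pi|_H\colon H\to L$ is a continuous surjective homomorphism, and so it is open by the total minimality of $H$; this forces $f$ to be open as well, whence $H/N$ is totally minimal.

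Combining the two steps, every closed subgroup of $G/N$ is totally minimal, so $G/N$ is \CTM. There is no substantial obstacle here; the argument is essentially bookkeeping with the correspondence theorem together with the (routine) hereditary behavior of total minimality under quotients, which is why the companion statement in \cite{firstpaper} was phrased for hereditarily totally minimal groups by the same reasoning.
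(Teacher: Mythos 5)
Your argument is correct and follows essentially the same route as the paper: pull a closed subgroup $D\leq G/N$ back to the closed preimage $D_1=\pi^{-1}(D)$, invoke c-total minimality of $G$ to get $D_1$ totally minimal, and then transfer total minimality to $D$ via the open mapping theorem applied to the continuous surjection $D_1\to D$. The only cosmetic difference is that you spell out why total minimality passes to the quotient (composing with $\pi|_{D_1}$), whereas the paper cites the openness of the restriction and the standard fact that quotients of totally minimal groups are totally minimal.
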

\begin{proof}
Let $N$ be a closed normal subgroup of  $G$ and $q:G \to G/N$ be the quotient map. Take a closed subgroup $D$ of $G/N$ and let $D_1 = q^{-1}(D)$. Now we prove that $D$ is totally minimal. Consider the restriction $q': D_1 \to D$. Clearly, $q'$ is a continuous surjective  homomorphism. Since $D_1$ is totally minimal by the hypothesis on $G$, we obtain that $q'$ is open. Moreover, being a quotient group of $D_1$, we deduce that $D$ is totally minimal.
\end{proof}

\begin{theorem}\label{thm:ballie}
Let $N$ be a compact open normal subgroup of a topological group $G$.Then the following conditions are equivalent:
\ben
\item $G$ is c-totally minimal;
\item $G/N$ is hereditarily totally minimal (i.e., hereditarily  non-topologizable).
\een
\end{theorem}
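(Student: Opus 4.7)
The plan is to prove the two implications separately, each as a short application of results already established earlier in the section.

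For the implication $(2)\Rightarrow(1)$, I would simply cite Proposition \ref{prop:COM}: the hypothesis that $N$ is a compact open normal subgroup with $G/N$ hereditarily totally minimal is exactly what that proposition requires in order to conclude that $G$ is c-totally minimal. No further work is needed in this direction.

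For the implication $(1)\Rightarrow(2)$, the strategy is first to pass the c-totally minimal property to the quotient and then to exploit the fact that the quotient is discrete. Concretely, since $N$ is closed and normal in $G$, Lemma \ref{lemma:strong} tells us that $G/N$ is itself c-totally minimal. The crucial additional observation is that because $N$ is \emph{open} in $G$, the quotient $G/N$ carries the discrete topology. In a discrete group every subgroup is automatically closed, so the property ``every closed subgroup is totally minimal'' coincides with ``every subgroup is totally minimal'', which is precisely the definition of hereditary total minimality. Hence $G/N$ is hereditarily totally minimal, and since it is discrete this is the same as saying $G/N$ is hereditarily non-topologizable.

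I do not anticipate any real obstacle: both directions reduce to invoking a previously proved statement (Proposition \ref{prop:COM} for one direction, Lemma \ref{lemma:strong} for the other), and the only conceptual point is the collapse of ``closed subgroup'' to ``subgroup'' in the discrete quotient $G/N$, which is immediate from the openness of $N$.
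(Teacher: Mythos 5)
Your proposal is correct and follows essentially the same route as the paper: the implication $(2)\Rightarrow(1)$ is exactly Proposition \ref{prop:COM}, and for $(1)\Rightarrow(2)$ the paper likewise applies Lemma \ref{lemma:strong} to conclude that $G/N$ is c-totally minimal and then uses discreteness of $G/N$ (every subgroup being closed) to upgrade this to hereditary total minimality. Nothing is missing.
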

\begin{proof}
$(1)\Rightarrow (2)$:  As $G$ is $c$-totally minimal, Lemma \ref{lemma:strong} implies that the quotient $G/N$ is c-totally minimal. Being discrete, $G/N$ is hereditarily totally minimal.\\
$(2)\Rightarrow (1)$:   See Proposition \ref{prop:COM}.
\end{proof}

\begin{corollary}\label{cor:balorlie}
Let $G$ be  a locally compact  group that is either balanced or Lie. Then the following conditions are equivalent:
\ben
\item  $G$ is c-totally minimal;
\item there exists a compact open normal subgroup $N$ of $G$ such that   $G/N$ is hereditarily totally minimal.\een
\end{corollary}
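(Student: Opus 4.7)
The plan is to reduce the corollary to Theorem~\ref{thm:ballie} by producing a compact open normal subgroup in $G$ under the hypothesis of condition~(1), and then invoking Proposition~\ref{prop:COM} for the converse. The key observation is that the extra assumption ``balanced or Lie'' is exactly what is needed to upgrade c-minimality to the existence of a compact open normal subgroup, via the already established Lemma~\ref{lem:liebal}.

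For the implication $(1)\Rightarrow (2)$, assume $G$ is c-totally minimal. Since total minimality implies minimality, every closed subgroup of $G$ is minimal, i.e.\ $G$ is c-minimal. Now $G$ is a locally compact c-minimal group that is either balanced or Lie, so Lemma~\ref{lem:liebal} produces a compact open normal subgroup $N$ of $G$. Applying the implication $(1)\Rightarrow (2)$ of Theorem~\ref{thm:ballie} to the pair $(G,N)$ then yields that $G/N$ is hereditarily totally minimal, which is~(2).

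For the converse $(2)\Rightarrow (1)$, the locally compact/balanced/Lie assumption is not needed: it is exactly the content of Proposition~\ref{prop:COM} (or equivalently the implication $(2)\Rightarrow (1)$ of Theorem~\ref{thm:ballie}) that whenever $G$ has a compact open normal subgroup $N$ with $G/N$ hereditarily totally minimal, $G$ is c-totally minimal.

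There is essentially no obstacle: the whole point of Corollary~\ref{cor:balorlie} is to package Lemma~\ref{lem:liebal} together with Theorem~\ref{thm:ballie}. The only thing worth double-checking is that passing from ``c-totally minimal'' to ``c-minimal'' is valid (it is, directly from the definitions), so that Lemma~\ref{lem:liebal} applies. Hence the proof is just a two-line concatenation of previous results.
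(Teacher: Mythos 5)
Your proof is correct and follows the paper's own argument exactly: Lemma \ref{lem:liebal} (after noting that c-totally minimal implies c-minimal) supplies the compact open normal subgroup, and Theorem \ref{thm:ballie} handles both implications. Nothing to add.
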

\begin{proof}
$(1)\Rightarrow (2)$: By Lemma \ref{lem:liebal}, there exists a compact open normal subgroup $N$ of $G$. Using Theorem \ref{thm:ballie}, we deduce that
$G/N$ is hereditarily totally minimal.\\
$(2)\Rightarrow (1)$: Use Theorem  \ref{thm:ballie}.
\end{proof}

We now provide another sufficient condition for the compactness of a c-totally minimal group.
A topological group $G$ is called  {\em strongly compactly covered} (see \cite{GBST}) if every element of $G$ is contained in a compact open normal subgroup of $G$.

\begin{corollary}
Let $G$ be a strongly compactly covered group. If $G$ is \CTM,\  then it is compact.
\end{corollary}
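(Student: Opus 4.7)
The plan is to reduce to Theorem \ref{thm:ballie} and then rule out an infinite discrete quotient using Fact \ref{fac:dhm}. Since $G$ is strongly compactly covered, the identity $e$ lies in some compact open normal subgroup $N$ of $G$. By Theorem \ref{thm:ballie}, the hypothesis of c-total minimality forces $G/N$ to be hereditarily totally minimal, and of course $G/N$ is discrete since $N$ is open. So it suffices to prove that $G/N$ is finite, for then $G$ is a finite union of cosets of the compact subgroup $N$ and is therefore compact.

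Next, I would show that $G/N$ is locally finite. The key observation is that the family of compact open normal subgroups of $G$ is closed under finite products: if $N_1,\ldots,N_n$ are compact open normal subgroups of $G$, then $M:=N_1N_2\cdots N_n$ is again a subgroup (by normality), it is compact as the continuous image of $N_1\times\cdots\times N_n$ under multiplication, and it is open since it contains the open set $N_1$. Given any finitely generated subgroup $H=\langle g_1,\ldots,g_n\rangle$ of $G$, applying this to compact open normal subgroups $N_i\ni g_i$ supplied by strong compact coverage yields a compact open normal subgroup $M$ of $G$ containing $H$. The image $MN/N\cong M/(M\cap N)$ is then a compact discrete, hence finite, subgroup of $G/N$ containing the image of $H$. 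Thus every finitely generated subgroup of $G/N$ is finite, i.e., $G/N$ is locally finite.

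Finally, I would invoke Fact \ref{fac:dhm}: since $G/N$ is discrete and hereditarily totally minimal (in particular hereditarily minimal), if it were infinite it could not be locally finite. This contradicts the previous paragraph unless $G/N$ is finite, which completes the argument. No single step seems genuinely hard here; the only point that requires a brief verification is that products of finitely many compact open normal subgroups remain compact open normal, and this follows from the standard continuity/normality observations above.
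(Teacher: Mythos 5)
Your proof is correct, and its skeleton matches the paper's: produce a compact open normal subgroup $N$, use Theorem \ref{thm:ballie} to see that the discrete group $G/N$ is hereditarily totally minimal, show $G/N$ is locally finite, and then conclude from Fact \ref{fac:dhm} that $G/N$ is finite, whence $G$ is compact. The difference lies in how local finiteness of $G/N$ is obtained. The paper simply quotes \cite[Proposition 1.1]{GBST}, which says that a strongly compactly covered group has a compact open normal subgroup $N$ with $G/N$ a torsion FC-group, and such groups are locally finite; you instead argue directly from the definition: the family of compact open normal subgroups is stable under finite products (the product is a subgroup by normality, compact as a continuous image of the product space, open since it contains one open factor), so the lift of any finite generating set of a subgroup of $G/N$ sits inside a compact open normal $M$, and $MN/N\cong M/(M\cap N)$ is compact and discrete, hence finite. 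Your route is more self-contained and elementary, avoiding the structural result on torsion FC-groups; the paper's route is shorter because that structure is already available in the cited reference. Both arguments are complete, and your use of Fact \ref{fac:dhm} (an infinite discrete hereditarily minimal group cannot be locally finite) is exactly how the paper closes the argument.
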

\begin{proof}
By \cite[Proposition 1.1]{GBST},  $G$  contains a compact open normal subgroup $N$ such that $G/N$ is a torsion FC-group (i.e., every element has finitely many conjugates).
By  Theorem \ref{thm:ballie}, this discrete locally finite group  is also hereditarily totally minimal. According to Fact \ref{fac:dhm}, which says that  a locally finite discrete hereditarily minimal group is finite, we have that $G/N$ must be finite. The finiteness of $G/N$ implies that $G$ is compact, as needed.
\end{proof}

\section{Open questions and concluding remarks}\label{sec:open}

In view of  Theorem \ref{pro:procmin} a natural question arises:
\begin{question}
	Let $A$ be a (totally) minimal abelian group and $K$ be a compact group. Is $G=A\times K$ c-(totally) minimal?
\end{question}

Note that by  Fact \ref{fac:EDS} $G$ is (totally) minimal whenever $A$ has the same property.

\vskip 0.3 cm

By Proposition \ref{cor:csol}, a complete \CTM \ solvable group is compact. Can this result be extended to locally solvable groups?  In other words:

\begin{question}\label{q:ex}
Let $G$ be a complete \CTM \ locally solvable group. Is $G$ compact?
\end{question}

The next proposition provides a positive answer to Question \ref{q:ex} in case $G$ has a compact open normal subgroup.
\begin{proposition}
Let $G$ be a  \CTM \ locally solvable group. If $G$ has a compact open normal subgroup $N$, then $G$ is compact.
\end{proposition}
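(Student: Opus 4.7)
The plan is to reduce the problem to the discrete quotient $G/N$ and then invoke Fact \ref{fac:dhm} together with Theorem \ref{thm:ballie}. Since $N$ is compact and open, the quotient $G/N$ carries the discrete topology, and compactness of $G$ is equivalent to finiteness of $G/N$. So the entire task is to show that $G/N$ is finite.

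First I would apply Theorem \ref{thm:ballie} to deduce, from the c-total minimality of $G$ and the existence of the compact open normal subgroup $N$, that $G/N$ is hereditarily totally minimal, that is, hereditarily non-topologizable as a discrete group. Next I would observe, as in the proof of Theorem \ref{comp}, that local solvability passes to quotients (see \cite{DIX}), so $G/N$ is a locally solvable discrete group.

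Now Fact \ref{fac:dhm} says that an infinite discrete hereditarily minimal group is neither locally finite nor locally solvable. Since hereditary total minimality implies hereditary minimality, the hypothesis forces $G/N$ to be a locally solvable discrete hereditarily minimal group, and hence it cannot be infinite. Thus $G/N$ is finite, and since $N$ is compact, $G$ is the union of finitely many cosets of a compact set, hence compact.

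The argument is essentially a direct assembly of results already established in the paper, so I do not expect a genuine obstacle; the only point requiring care is confirming that the conclusion of Theorem \ref{thm:ballie} (hereditary total minimality, in the sense of every subgroup being totally minimal in the induced topology) genuinely implies hereditary minimality of the discrete group $G/N$, which is immediate since every subgroup of a discrete group inherits the discrete topology.
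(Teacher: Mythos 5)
Your proof is correct and follows essentially the same route as the paper: apply Theorem \ref{thm:ballie} to conclude that the discrete locally solvable quotient $G/N$ is hereditarily totally minimal, then invoke Fact \ref{fac:dhm} to get finiteness of $G/N$, whence $G$ is compact. The extra details you supply (local solvability passing to quotients, hereditary total minimality implying hereditary minimality) are correct and merely make explicit what the paper leaves implicit.
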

\begin{proof}
By Theorem \ref{thm:ballie}, the locally solvable discrete group $G/N$ is hereditarily totally minimal. By Fact \ref{fac:dhm}, $G/N$ is finite.
This implies that $G$ is compact.
\end{proof}

We proved in Theorem \ref{comp} that a locally solvable $c$-minimal Lie group is compact. In view of Lemma \ref{lem:liebal} we ask:
\begin{question}\label{que:lievsba}
Let $G$ be a locally compact, locally solvable, $c$-minimal balanced group. Is $G$ compact?
\end{question}

Let $G$ be a locally compact balanced group satisfying the following conditions:
\ben
\item every abelian subgroup of $G$ has compact closure;
\item $G$ is topologically locally finite (i.e., each precompact subset of $G$ generates a precompact subgroup).
\een

Grosser and Herfort conjectured that such a topological group is either totally disconnected or compact (see Remark on page 222 of \cite{GH}).
Even though we cannot provide a positive answer to Question \ref{que:lievsba} at this point,  we shall see in Corollary \ref{cor:last} below that a locally compact, locally solvable, $c$-minimal balanced group satisfies the compactness conditions (1)-(2). The next proposition extends \cite[Proposition 2.2]{DGB} from the abelian case.

\begin{proposition}\label{pro:tlf}
Let $G$ be a compactly covered, locally compact, locally solvable balanced group. Then every compact subset of $G$ is contained in a compact open subgroup of $G$.
\end{proposition}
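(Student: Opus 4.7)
The plan is to reduce the claim to the totally disconnected case by factoring out the connected component, and then combine the local base of compact open normal subgroups (coming from balance) with the fact that a locally solvable torsion discrete group is locally finite, in order to produce the required compact open subgroup containing the given compact set $K$.

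First I would verify that $c(G)$ is compact. As noted before Lemma \ref{lem:ccov}, the class of compactly covered groups is closed under closed subgroups; applied to the closed subgroup $c(G)$, this together with its local compactness and connectedness triggers Fact \ref{fac:cc}, giving compactness of $c(G)$. Setting $\widetilde G := G/c(G)$ with quotient map $q\colon G \to \widetilde G$, the group $\widetilde G$ inherits compact coverage, local solvability and balance, and it is locally compact and totally disconnected. Exactly as in the proof of Lemma \ref{lem:liebal}, $\widetilde G$ therefore admits a local base at the identity consisting of compact open normal subgroups.

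Next, I would fix one such $U \unlhd \widetilde G$ and examine the discrete quotient $\widetilde G/U$. Each of its elements is the image of some $\tilde x \in \widetilde G$ lying in a compact subgroup, and the image of such a compact subgroup in the discrete group $\widetilde G/U$ must be finite. Hence $\widetilde G/U$ is locally solvable (as a quotient of $\widetilde G$) and torsion, so it is locally finite by \cite[Proposition 1.1.5]{DIX}. Given a compact $K \subseteq G$, the image $q(K)$ is compact in $\widetilde G$ and is therefore covered by finitely many cosets $u_1U,\ldots,u_nU$. Letting $M$ be the subgroup of $\widetilde G/U$ generated by the classes $\overline{u_1},\ldots,\overline{u_n}$, local finiteness makes $M$ finite, so its preimage $H \leq \widetilde G$ is a compact open subgroup containing $q(K)$. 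Pulling back, $q^{-1}(H)$ is open in $G$ (as $H$ is in $\widetilde G$) and compact (being an extension of the compact $c(G)$ by the compact $H$), and it contains $K$.

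The main obstacle I anticipate is verifying that the three hypotheses\textemdash compact coverage, local solvability, and balance\textemdash descend cleanly through the tower $G \to \widetilde G \to \widetilde G/U$, since this is precisely what makes the algebraic input from \cite{DIX} applicable and produces a finite, rather than merely finitely generated, subgroup $M$. Once this descent is in place, the remaining steps amount to routine topological bookkeeping: a compactness argument to cover $q(K)$ by finitely many $U$-cosets, and the observation that a finite extension of a compact open subgroup is again compact and open.
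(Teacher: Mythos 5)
Your argument is correct and follows essentially the same route as the paper's proof: compactness of $c(G)$ via Fact \ref{fac:cc}, a compact open normal subgroup $U$ of the balanced totally disconnected quotient, torsion plus local solvability giving local finiteness of $\widetilde G/U$ by \cite[Proposition 1.1.5]{DIX}, and pulling back a finite subgroup containing the image of $K$. The only (immaterial) difference is that the paper first enlarges $K$ to a neighborhood of $e$ and picks $U\subseteq q(K)$, whereas you simply cover $q(K)$ by finitely many $U$-cosets, which works just as well.
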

\begin{proof}
Let $K$ be a compact subset of $G$. Since $G$ is locally compact there exists a  compact neighborhood $V$ of the identity $e$. Replacing $K,$  if necessary, with its superset $(K\cup\{e\})\cdot V$ we may assume, without loss of generality, that $K$ itself is a neighborhood of $e$. The rest of the proof is  very similar to the proof of \cite[Proposition 2.2]{DGB},  and we give it here for the sake of completeness.

Consider the quotient homomorphism $q: G \to G/c(G)$.  As $G$ is a locally compact compactly covered group, so is its closed subgroup $c(G)$. By Fact \ref{fac:cc}, $c(G)$ is compact. Since $G/c(G)$ is  a locally compact totally disconnected balanced group, it has a local base at the identity consisting of compact open normal subgroups. Let $U$ be a compact open normal subgroup of $G/c(G)$ contained in $q(K).$ Being discrete and compactly covered, the group $(G/c(G))/U$ is torsion.
Using \cite[Proposition 1.1.5]{DIX},we have that the locally solvable group $(G/c(G))/U$ is locally finite.
Hence, there exists a finite subgroup $N$ of $(G/c(G))/U$ containing  the finite set $r(q(K))$, where $$r:G/c(G)\to (G/c(G))/U$$ is the quotient homomorphism. Since the groups $U,c(G)$ and $N$ are all compact, it follows that   $(r\circ q)^{-1}(N)$ is a compact open subgroup of $G$ containing $K$, as needed.
\end{proof}

\begin{corollary}\label{cor:last}
Let $G$ be a locally compact, locally solvable, $c$-minimal balanced group. Then $G$ is topologically locally finite and  every abelian subgroup of $G$ has compact closure.
\end{corollary}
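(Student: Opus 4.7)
The plan is to derive both conclusions directly from material already developed in the paper, without any new ingredients. The key observation is that, under the hypotheses, $G$ satisfies \emph{all} the conditions required to apply Proposition \ref{pro:tlf}, and moreover closures of abelian subgroups are forced to be compact by $c$-minimality together with Fact \ref{fac:maispre}.

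First I would verify the second assertion. Let $H$ be an abelian subgroup of $G$. Its closure $\overline{H}$ is a closed abelian subgroup of $G$, hence locally compact. Since $G$ is $c$-minimal, $\overline{H}$ is minimal, so by Fact \ref{fac:maispre} it is precompact. A locally compact precompact group is compact, so $\overline{H}$ is compact, which is exactly the statement that every abelian subgroup of $G$ has compact closure.

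For the first assertion, I would note that a locally compact group is complete, so Lemma \ref{lem:ccov} applies and gives that $G$ is compactly covered. Now $G$ is a compactly covered, locally compact, locally solvable, balanced group, so Proposition \ref{pro:tlf} is applicable: every compact subset of $G$ is contained in a compact open subgroup. Given any precompact subset $S\subseteq G$, its closure $\overline{S}$ is compact, hence $\overline{S}\subseteq K$ for some compact open subgroup $K$ of $G$. But then $\langle S\rangle \subseteq K$, so $\overline{\langle S\rangle}\subseteq K$ is compact, i.e.\ $\langle S\rangle$ is precompact. This is precisely topological local finiteness.

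I do not foresee any real obstacle here: the corollary is essentially a packaging of Proposition \ref{pro:tlf} (for topological local finiteness) with the standard closure-of-abelian-subgroup argument available from $c$-minimality (for the abelian part). The only point worth being careful about is citing completeness of $G$ when invoking Lemma \ref{lem:ccov}, which is automatic from local compactness.
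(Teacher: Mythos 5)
Your proof is correct and follows essentially the same route as the paper: Lemma \ref{lem:ccov} (via completeness of the locally compact group $G$) gives that $G$ is compactly covered, Proposition \ref{pro:tlf} then yields topological local finiteness, and $c$-minimality together with Fact \ref{fac:maispre} gives compact closures of abelian subgroups. You merely spell out the small deductions (precompact sets have compact closure, hence their generated subgroups sit in a compact open subgroup) that the paper leaves implicit.
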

\begin{proof}
By Lemma \ref{lem:ccov}, $G$ is compactly covered. It follows from Proposition \ref{pro:tlf} that $G$ is topologically locally finite.
By Fact \ref{fac:maispre}, every  abelian subgroup of $G$ has compact closure.
\end{proof}

Let $N$ be a compact open normal subgroup of a topological group $G$. By Proposition \ref{prop:COM}, if $G/N$ is hereditarily  minimal, then $G$ is c-minimal. Moreover, by Theorem \ref{thm:ballie}, $G$ is \CTM\ if and only if $G/N$ is hereditarily totally minimal.

\begin{question}
Let $G$ be a c-minimal  group with a compact open normal subgroup $N$. Is $G/N$ hereditarily  minimal?
\end{question}

 \subsection*{Acknowledgments}
It is a pleasure to thank the referee for a very careful reading and a wealth of helpful comments that improved this paper substantially. We thank M. Megrelishvili whose question motivated  this research. Thanks are due also to D. Dikranjan and D. Toller for their useful suggestions.

The first author is supported by the National Natural Science Foundation of China (Grant No. 12001264) and the Natural Science Foundation of Jiangsu Province (Grant No. BK20200834).

\end{document}